\documentclass{amsart}


\usepackage[utf8]{inputenc}\usepackage{tgpagella}\usepackage{mathpazo}

\usepackage{sty/Donald}

\usepackage{sty/Alphabets}      
\usepackage{sty/Definitions}    
\usepackage{sty/PageSetup}      
\usepackage{sty/Environments}   

\begin{document}
\title{A bicategorical pasting theorem}
\author{Niles Johnson}
\address{Department of Mathematics\\
	 The Ohio State University at Newark\\
	 1179 University Drive\\ 
	 Newark, OH 43055, USA}
       \email{johnson.5320@osu.edu\\
       yau.22@osu.edu}
\author{Donald Yau}


\begin{abstract}
We provide an elementary proof of a bicategorical pasting theorem that does not rely on Power's $2$-categorical pasting theorem, the bicategorical coherence theorem, or the local characterization of a biequivalence.
\end{abstract}


\subjclass[2010]{18D05, 18A10}
\keywords{Bicategories, $2$-categories, pasting schemes, pasting diagrams, graphs.}

\date{02 October, 2019}
\maketitle




\section{Introduction}\label{sec:intro}

Bicategories and their pasting diagrams were introduced by B\'{e}nabou
\cite{benabou}.  Pasting diagrams in bicategories,
such as the following,
\begin{equation}\label{pasting-example}
\begin{tikzpicture}[x=20mm,y=20mm,baseline=(v.base)]
  \newcommand{\core}{
    \draw[0cell] 
    (0,0) node (v) {V}
    (1,0) node (s) {S}
    (1.75,.5) node (u) {U}
    (1.75,-.5) node (w) {W}
    (2.5,0) node (t) {T}
    ;
    \draw[1cell] 
    (s) edge[swap] node {} (u)
    (s) edge node {} (w)
    (u) edge node {} (t)
    (w) edge[swap] node {} (t)
    ;
    \draw[2cell] 
    node[between=s and t at .5, rotate=-90,font=\Large] (T2) {\Rightarrow}
    (T2) node[right] {}
    ;
  }
    \core
    \draw[1cell] 
    (v) edge node[pos=.4] {} (s)
    (v) edge[bend left] node (f1) {} (u)
    (v) edge[bend right, swap] node (g1) {} (w)
    ;
    \draw[2cell] 
    node[between=f1 and s at .5, rotate=-45,font=\Large] (T1) {\Rightarrow}
    (T1) node[above right] {}
    node[between=g1 and s at .5, rotate=225,font=\Large] (T3) {\Rightarrow}
    (T3) node[below right] {}
    ;
\end{tikzpicture}
\end{equation}
are analogous to commutative diagrams
in categories.  They allow one to use diagrams to express iterated
vertical composites of $2$-cells of the form
\begin{equation}\label{generic-atomic-graph}
\begin{tikzpicture}[scale=1, shorten >=-3pt, shorten <=-2pt,baseline=(a1.base)]
\tikzset{every node/.style={scale=.8, inner sep=2mm}} 
\node (a1) at (0,0) {$\bullet$}; \node (a2) at (1,0) {$\bullet$}; \node at (1.5,0) {$\cdots$};
\node (a3) at (2,0) {$\bullet$}; \node (a4) at (3,0) {$\bullet$};
\node (x1) at (3.5,.7) {$\bullet$}; \node at (4,.7) {$\cdots$}; \node (x2) at (4.5,.7) {$\bullet$};
\node (y1) at (3.5,-.7) {$\bullet$}; \node at (4,-.7) {$\cdots$}; \node (y2) at (4.5,-.7) {$\bullet$};
\node (b1) at (5,0) {$\bullet$}; \node (b2) at (6,0) {$\bullet$}; \node at (6.5,0) {$\cdots$};
\node (b3) at (7,0) {$\bullet$}; \node (b4) at (8,0) {$\bullet$};
\node at (4,0) {\rotatebox{270}{\LARGE{$\Rightarrow$}}}; 
\draw [arrow] (a1) to (a2); \draw [arrow] (a3) to (a4);
\draw [arrow] (a4) to (x1); \draw [arrow] (a4) to (y1);
\draw [arrow] (x2) to (b1); \draw [arrow] (y2) to (b1);
\draw [arrow] (b1) to (b2); \draw [arrow] (b3) to (b4);
\end{tikzpicture}
\end{equation}
with some bracketings of the top and bottom paths that are compatible with the (co)domain of the middle $2$-cell.  For example, the two triangle identities that define an internal adjunction in a bicategory can be compactly expressed in terms of a few pasting diagrams; see \cite{kelly-street} (Section 2.1) for the $2$-category case.  Moreover, the definitions of monoidal bicategories, as well as their symmetric, sylleptic, and braided variants, involve a number of large pasting diagrams \cite{mccrudden,stay}, without which the long vertical composites would be very hard to read.

A pasting theorem asserts that each pasting diagram has a uniquely defined composite that is independent of the order of the vertical composites, as long as they are defined.  For $2$-categories, such a pasting theorem was proved by Power \cite{power}.  There are basically two steps.  First he defined a concept of graphs with an acyclicity condition that ensures the existence of a composite in a $2$-category.  Then he showed by an induction that this composite has the desired uniqueness property.

For general bicategories, Verity \cite{verity} proved a bicategorical pasting theorem by extending Power's concept of graphs to include bracketings of the (co)domain of each interior face and of the  global (co)domain.  Briefly, he first applied the Bicategorical Coherence Theorem \cite{maclane-pare,street_cat-structures}, which asserts each bicategory $\B$ is retract biequivalent to a $2$-category $\A$.  With such a biequivalence $h : \B \to \A$, a pasting diagram in $\B$ yields a pasting diagram in $\A$, which has a unique composite by Power's pasting theorem for $2$-categories.  Using the fact that $h$ is locally full and faithful, a unique $2$-cell composite is then obtained back in $\B$.  The proof that this composite is independent of the choice of a biequivalence $h$ also relies on the Bicategorical Coherence Theorem.

The purpose of this paper is to prove a bicategorical pasting theorem that does \emph{not} rely on (i) Power's $2$-categorical pasting theorem, (ii) the Bicategorical Coherence Theorem, (iii) the local characterization of a biequivalence, or (iv) that $\Bicat(\B,\B)$ is a bicategory (with lax functors as objects, lax natural transformations as $1$-cells, and modifications as $2$-cells).  In fact, our proof stays entirely within the given bicategory, and only uses the basic axioms of a bicategory.

In addition to being much more elementary, our approach yields a $2$-categorical pasting theorem which is independent of Power's theorem.  Moreover, the authors were motivated by concurrent work \cite{JYquillen} to give a self-contained proof of the local characterization of biequivalences.  Pasting diagrams are an indispensible part of such work, and therefore one requires an independent pasting theorem.

The essential difference between a 2-categorical pasting theorem and a
bicategorical pasting theorem is the presence of nontrivial
associators.  One adds bracketings to specify the order of composition
of 1-cells, but then a bracketed pasting diagram does not necessarily
admit a composite.  For example, the unique bracketing of the diagram
in \cref{pasting-example} does not have a well-defined composite in a
general bicategory---one must extend the diagram by inserting
appropriate associators.  The content of this paper has three parts,
as follows.

First, in \cref{sec:pasting-schemes} we explain the graph theoretic
concepts necessary to understand pasting diagrams and their extensions
by associators; these are the notions of pasting scheme
(\cref{def:pasting-scheme}) and composition scheme
(\cref{def:bicategorical-pasting-scheme}).  The main result of this
section is \cref{bicat-pasting-existence}, which proves that every
pasting scheme extends to a composition scheme.

Second, in \cref{sec:pasting-diagrams} we apply the preceding graph
theory to explain pasting diagrams and their extensions to what we
call composition diagrams.  Every composition diagram has a
well-defined composite, as we detail in
\cref{def:bicat-diagram-composite}.  This section includes the
definition of bicategory to fix notation and terminology, together
with a detailed example for the diagram in \cref{pasting-example}.

Finally, in \cref{sec:bicat-pasting-theorem} we prove that the
composites resulting from any two extensions of a given pasting
diagram are equal.  This is the Bicategorical Pasting Theorem
  \ref{thm:bicat-pasting-theorem}.  Its proof depends on a
generalization of Mac Lane's Coherence Theorem, which we explain,
together with an induction argument similar to that of \cite{power}.
We note that, by restricting the argument to $2$-categories, we
recover a pasting theorem for $2$-categories which is essentially
Power's.

In the 2-categorical case, the only difference between our approach
and that of \cite{power,verity} is in our handling of the underlying
graph theory.  Power and Verity consider plane graphs with a source
and a sink, and bracketings in the bicategory case, that have no
directed cycles.  We also use plane graphs with a source and a sink,
and bracketings for all (co)domains.  However, instead of the
non-existence of directed cycles, our acyclicity condition is phrased
as the existence of a vertical decomposition of the graph into atomic
graphs, each containing one interior face like the one in
\cref{generic-atomic-graph}.  One advantage of this approach is that
it strictly mirrors the way pasting diagrams are usually used in
practice, namely, as vertical composites of 2-cells each produced by
whiskering a given $2$-cell with a number of $1$-cells.  Another is
that it greatly simplifies the graph theoretic work one must do,
particularly in the bracketed case.

\section{Pasting Schemes and Composition Schemes}\label{sec:pasting-schemes}

In this section we define the graph theoretic notions of pasting
scheme and composition scheme.
The main result of this section is \Cref{bicat-pasting-existence}.  It characterizes bracketed graphs that admit a composition scheme extension as those whose underlying anchored graphs admit a pasting scheme presentation.

\begin{definition}\label{def:graph}
A \emph{graph} is a tuple $G=(V_G,E_G,\psi_G)$ consisting of:
\begin{itemize}
\item a finite set $V_G$ of \emph{vertices} with at least two elements;
\item a finite set $E_G$ of \emph{edges} with at least two elements such that $E_G \cap V_G = \varnothing$;
\item an \emph{incidence function} $\psi_G : E_G\to V_G^{\times 2}$.  For each edge $e$, if $\psi_G(e)=(u,v)$, then $u$ and $v$ are called the \emph{tail} and the \emph{head} of $e$, respectively, and together they are called the \emph{ends} of $e$.  
\end{itemize} 
Moreover:
\begin{enumerate}
\item The \emph{geometric realization} of a graph $G$ is the topological quotient
\[|G| = \Bigl[\bigl(\coprodover{v\in V_G} \{v\}\bigr) \coprod \bigl(\coprodover{e\in E_G} [0,1]_e\bigr)\Bigr]\Big/\sim\]
in which:
\begin{itemize}
\item $\{v\}$ is a one-point space indexed by a vertex $v$.
\item Each $[0,1]_e$ is a copy of the topological unit interval $[0,1]$  indexed by an edge $e$.
\item The identification $\sim$ is generated by
\[u \sim 0 \in [0,1]_e \ni 1 \sim v \ifspace \psi_G(e)=(u,v).\]
\end{itemize}
\item A \emph{plane graph} is a graph together with a topological embedding of its geometric realization into the complex plane $\fieldc$.
\end{enumerate}
\end{definition}

Each vertex $v$ is drawn as a circle $\raisebox{-.05cm}{\scalebox{.8}{\begin{tikzpicture}\node [draw,circle,thick,minimum size=.4cm,inner sep=0pt] {$v$};\end{tikzpicture}}}$ with the name of the vertex inside.  Each edge $e$ with tail $u$ and head $v$ is drawn as an arrow from $u$ to $v$, as in $\scalebox{.7}{\begin{tikzpicture}
\node [smallplain] (u) {$u$};
\node [smallplain, right=.6cm of u] (v) {$v$};
\draw [arrow] (u) to node{$e$} (v); 
\end{tikzpicture}}$.
A plane graph is a graph together with a drawing of it in the complex plane $\fieldc$ such that its edges meet only at their ends.  To simplify the notation, we will identify a plane graph $G$ with its geometric realization $|G|$ and with the latter's topologically embedded image in $\fieldc$.

\begin{definition}\label{def:graph-path}
Suppose $G= (V_G,E_G,\psi_G)$ is a graph.
\begin{enumerate}
\item A \emph{path} in $G$ is an alternating sequence $v_0e_1v_1\cdots e_nv_n$ with $n\geq 0$ of vertices $v_i$'s and edges $e_i$'s such that:
\begin{itemize}
\item each $e_i$ has ends $\{v_{i-1},v_i\}$;
\item the vertices $v_i$'s are distinct.
\end{itemize}
This is also called a \emph{path from $v_0$ to $v_n$}.  A path is \emph{trivial} if $n=0$, and is \emph{non-trivial} if $n\geq 1$.
\item If $p = v_0e_1v_1\cdots e_nv_n$ is a path, then $p^* = v_ne_n\cdots v_1e_1v_0$ is the \emph{reversed path} from $v_n$ to $v_0$. 
\item A \emph{directed path} is a path such that each $e_i$ has head $v_i$.   
\item $G$ is \emph{connected} if for each pair of distinct vertices $\{u,v\}$, there exists a path from $u$ to $v$.
\end{enumerate}
\end{definition}

Using the orientation of the complex plane $\fieldc$, we identify two connected plane graphs if they are connected by a homeomorphism that preserves the orientation and the incidence relation, and that maps vertices to vertices and edges to edges.

\begin{definition}\label{def:faces}
Suppose $G$ is a connected plane graph.
\begin{enumerate}
\item The connected subspaces of the complement $\fieldc \setminus |G|$ are called the \emph{open faces} of $G$.  Their closures are called \emph{faces} of $G$.  The unique unbounded face is called the \emph{exterior face}, denoted by $\ext_G$.  The bounded faces are called  \emph{interior faces}.
\item The vertices and edges in the boundary $\bd_F$ of a face $F$ of $G$ form an alternating sequence $v_0e_1v_1\cdots e_nv_n$ of vertices and edges such that:
\begin{itemize}
\item $v_0 = v_n$.
\item The ends of $e_i$ are $\{v_{i-1},v_i\}$.
\item Traversing $\bd_F$ from $v_0$ to $v_n=v_0$ along the edges $e_1, e_2,\ldots,e_n$ in this order, ignoring their tail-to-head orientation, the face $F$ is always on the right-hand side.
\end{itemize}
\item An interior face $F$ of $G$ is \emph{anchored} if it is equipped with
\begin{itemize}
\item two distinct vertices $s_F$ and $t_F$, called the \emph{source} and the \emph{sink} of $F$, respectively, and
\item two directed paths $\dom_F$ and $\codom_F$ from $s_F$ to $t_F$, called the \emph{domain} and the \emph{codomain} of $F$, respectively,
\end{itemize}  
such that $\bd_F = \dom_F \codom_F^*$ with the first vertex in $\codom_F^*=t_F$ removed on the right-hand side. 
\item The exterior face of $G$ is \emph{anchored} if it is equipped with
\begin{itemize}
\item two distinct vertices $s_G$ and $t_G$, called the \emph{source} and the \emph{sink} of $G$, respectively, and
\item two directed paths $\dom_G$ and $\codom_G$ from $s_G$ to $t_G$, called the \emph{domain} and the \emph{codomain} of $G$, respectively,
\end{itemize}  
such that $\bd_{\ext_G} = \codom_G\dom_G^*$ with the first vertex in $\dom_G^*=t_G$ removed on the right-hand side. 
\item $G$ is \emph{anchored} if every face of $G$ is anchored.
\item $G$ is an \emph{atomic graph} if it is an anchored graph with exactly one interior face.
\end{enumerate}
\end{definition}

In an anchored graph, the boundary of each interior face is oriented clockwise.  On the other hand, the boundary of the exterior face is oriented counter-clockwise.

\begin{example}\label{ex:atomic-graph}
Here is an atomic graph $G$
\[\begin{tikzpicture}[xscale=.7, yscale=.3]
\node [plain] (s) {$s$}; \node [plain, right=1cm of s] (s1) {$s_F$}; 
\node [right=1cm of s1] (F) {$F$}; \node [plain, above=.5cm of F] (u) {$u$}; \node [plain, below left=.8cm and .4cm of F] (v) {$v$};
\node [plain, below right=.8cm and .4cm of F] (w) {$w$}; 
\node [plain, right=1cm of F] (t1) {$t_F$};
\node [plain, right=1cm of t1] (t) {$t$};
\node [above=.7cm of s] () {$\ext_G$};
\draw [arrow] (s) to node {\scriptsize{$f$}} (s1); 
\draw [arrow] (s1) to node {\scriptsize{$h_1$}} (u);
\draw [arrow] (s1) to node[swap] {\scriptsize{$h_3$}} (v); 
\draw [arrow] (v) to node {\scriptsize{$h_4$}} (w); 
\draw [arrow] (u) to node {\scriptsize{$h_2$}} (t1);
\draw [arrow] (w) to node[swap] {\scriptsize{$h_5$}} (t1); 
\draw [arrow] (t1) to node {\scriptsize{$g$}} (t);
\end{tikzpicture}\]
with:
\begin{itemize}
\item unique interior face $F$ with source $s_F$, sink $t_F$, $\dom_F = s_F h_1 u h_2 t_F$, and $\codom_F = s_F h_3 v h_4 w h_5 t_F$;
\item exterior face $\ext_G$ with source $s$, sink $t$, $\dom_G = sfs_F h_1 u h_2 t_Fgt$, and $\codom_G =sfs_F h_3 v h_4 w h_5 t_Fgt$.\dqed
\end{itemize}
\end{example}

\begin{lemma}\label{atomic-domain}
If $G$ is an atomic graph with unique interior face $F$, then 
\[\dom_F \subseteq \dom_G \andspace \codom_F \subseteq \codom_G.\]
\end{lemma}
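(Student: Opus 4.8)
The plan is to compare the boundary walk of the interior face $F$ with that of the exterior face $\ext_G$, exploiting the fact that in an atomic graph these are the \emph{only} two faces. Since $\dom_F$ and $\codom_F$ are internally disjoint directed paths from $s_F$ to $t_F$, the closed walk $\bd_F=\dom_F\codom_F^*$ is a simple cycle, i.e.\ a Jordan curve, with the open region $F$ on one side; every edge of $\bd_F$ is therefore a non-bridge edge, and the face on its other side must be the unique remaining face $\ext_G$. First I would record the orientation conventions from \cref{def:faces}(2). Traversing $\bd_F=\dom_F\codom_F^*$ with $F$ on the right, the edges of $\dom_F$ are crossed in their given (tail-to-head) orientation, while the edges of $\codom_F$ appear inside the reversed path $\codom_F^*$ and are crossed against their orientation. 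Likewise, traversing $\bd_{\ext_G}=\codom_G\dom_G^*$ with $\ext_G$ on the right, the edges of $\codom_G$ are crossed with their orientation and those of $\dom_G$ (appearing in $\dom_G^*$) against it.

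The key step is the planar observation that an edge bordering two \emph{distinct} faces is traversed in opposite directions in the two boundary walks when each face is kept on the right. I would apply this to each edge $e$ of the cycle $\bd_F$: such an $e$ borders $F$ and $\ext_G$, so its direction of traversal in $\bd_{\ext_G}$ is opposite to that in $\bd_F$; moreover, being a non-bridge, $e$ occurs exactly once in $\bd_{\ext_G}$, hence in exactly one of the segments $\codom_G$ or $\dom_G^*$. If $e\in\dom_F$, then $e$ is crossed \emph{with} its orientation in $\bd_F$, hence \emph{against} its orientation in $\bd_{\ext_G}$; the only such edges lie in $\dom_G^*$, so $e\in\dom_G$. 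Symmetrically, if $e\in\codom_F$, then $e$ occurs in $\codom_F^*$ and is crossed against its orientation in $\bd_F$, hence with its orientation in $\bd_{\ext_G}$, forcing $e\in\codom_G$.

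This shows every edge of $\dom_F$ lies in $\dom_G$ and every edge of $\codom_F$ lies in $\codom_G$; since $s_F\neq t_F$ forces each of $\dom_F$ and $\codom_F$ to contain at least one edge, their vertices (being the ends of these edges) are captured as well, giving $\dom_F\subseteq\dom_G$ and $\codom_F\subseteq\codom_G$ as subgraphs. I expect the main obstacle to be the rigorous justification of the two supporting planar facts—that $\bd_F$ is a Jordan curve whose only adjacent faces are $F$ and $\ext_G$, and that a shared non-bridge edge is traversed oppositely in the two face boundaries—rather than the bookkeeping of orientations, which is routine once those facts are in hand. A concrete check against \cref{ex:atomic-graph}, where $\dom_F=s_Fh_1uh_2t_F$ sits inside $\dom_G=sfs_Fh_1uh_2t_Fgt$, confirms the expected placement.
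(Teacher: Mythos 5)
Your proposal is correct and follows essentially the same route as the paper's proof: both exploit the orientation convention (the face lying on the right-hand side of its boundary walk) together with the fact that in an atomic graph every edge of $\bd_F$ also borders the exterior face, to conclude that edges of $\dom_F$ must land in $\dom_G$ and edges of $\codom_F$ in $\codom_G$. The paper phrases the key step as ``$F$ on the right forces $\ext_G$ on the left, so $e\notin\codom_G$,'' while you phrase it as the opposite-traversal fact for an edge shared by two faces; these are the same planar observation, and your write-up is simply somewhat more explicit about the underlying topological justifications.
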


\begin{proof}
Since $G$ only has one interior face, the boundary $\bd_{\ext_G}=\codom_G\dom_G^*$ of the exterior face contains all of its edges.  Traversing an edge $e$ in $\dom_F$ from its tail to its head, $F$ is on the right-hand side, so $\ext_G$ is on the left-hand side.  Therefore, $e$ cannot be contained in the directed path $\codom_G$.  This proves the first containment.  The second containment is proved similarly.
\end{proof}

In particular, each atomic graph $G$ consists of its unique interior face $F$, a directed path from the source $s$ of $G$ to the source $s_F$ of $F$, and a directed path from the sink $t_F$ of $F$ to the sink $t$ of $G$.  Next we define a composition of anchored graphs that mimics the vertical composition of $2$-cells in a bicategory.

\begin{definition}\label{def:anchored-composition}
Suppose $G$ and $H$ are anchored graphs such that $s_G=s_H$, $t_G=t_H$, and $\codom_G = \dom_H$.  The \emph{vertical composite} $HG$ is the anchored graph defined by the following data.
\begin{itemize}
\item The connected plane graph of $HG$ is the quotient 
\[\dfrac{G \sqcup H}{\bigl\{\codom_G\,=\,\dom_H\bigr\}}\] of the disjoint union of $G$ and $H$, with the codomain of $G$ identified with the domain of $H$.
\item The interior faces of $HG$ are the interior faces of $G$ and $H$, which are already anchored.
\item The exterior face of $HG$ is the intersection of $\ext_G$ and $\ext_H$, with source $s_G=s_H$, sink $t_G=t_H$, domain $\dom_G$, and codomain $\codom_H$.
\end{itemize}
\end{definition}

The following observation follows from a simple inspection.

\begin{lemma}\label{graph-comp-associative}
If $G$, $H$, and $I$ are anchored graphs such that the vertical composites $IH$ and $HG$ are defined, then $(IH)G=I(HG)$.
\end{lemma}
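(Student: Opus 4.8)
The plan is to verify directly that the two anchored graphs $(IH)G$ and $I(HG)$ agree in all three pieces of data of an anchored graph: the underlying plane graph, the anchored interior faces, and the anchored exterior face. First I would record that both composites are in fact defined. The hypotheses give $s_G = s_H = s_I$, $t_G = t_H = t_I$, $\codom_G = \dom_H$, and $\codom_H = \dom_I$. Since $s_{HG} = s_G$, $t_{HG} = t_G$, and $\codom_{HG} = \codom_H = \dom_I$, the composite $I(HG)$ is defined; symmetrically, since $s_{IH} = s_H$, $t_{IH} = t_H$, and $\dom_{IH} = \dom_H = \codom_G$, the composite $(IH)G$ is defined.

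For the underlying plane graph, I would observe that both sides arise from the triple disjoint union $G \sqcup H \sqcup I$ by imposing the two identifications $\codom_G = \dom_H$ and $\codom_H = \dom_I$. Unwinding \cref{def:anchored-composition}, the graph $I(HG)$ is the quotient of $\bigl(\tfrac{G \sqcup H}{\codom_G = \dom_H}\bigr) \sqcup I$ by $\codom_H = \dom_I$, while $(IH)G$ is the quotient of $G \sqcup \bigl(\tfrac{H \sqcup I}{\codom_H = \dom_I}\bigr)$ by $\codom_G = \dom_H$. Using associativity of the disjoint union, and the fact that a quotient by two separately specified identifications does not depend on the order in which they are formed, both yield the same quotient of $G \sqcup H \sqcup I$. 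The two gluings are compatible because $\dom_H$ and $\codom_H$ are distinct directed paths in $H$ meeting only at the common source $s_H$ and sink $t_H$, so the identifications interact only at these vertices, where $s_G = s_H = s_I$ and $t_G = t_H = t_I$ force consistency. The resulting plane embeddings coincide, so the two connected plane graphs are identified in the sense fixed before \cref{def:faces}.

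It then remains to match the anchoring data. The interior faces of each composite are exactly the interior faces of $G$, $H$, and $I$, each retaining its own anchoring, and these clearly agree on the two sides. For the exterior face, $I(HG)$ has source $s_{HG} = s_G$, sink $t_{HG} = t_G$, domain $\dom_{HG} = \dom_G$, and codomain $\codom_I$, whereas $(IH)G$ has source $s_G$, sink $t_G$, domain $\dom_G$, and codomain $\codom_{IH} = \codom_I$. Since all four entries coincide, the anchored exterior faces agree, and therefore $(IH)G = I(HG)$.

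I expect the only delicate point to be confirming that the two quotient identifications may be performed in either grouping without affecting the result, that is, that gluing along $\codom_G = \dom_H$ and then along $\codom_H = \dom_I$ produces the same plane graph as the opposite grouping. This reduces to the observation that the glued paths overlap only at the shared source and sink, which the anchoring conditions supply; the remainder is routine bookkeeping of the source, sink, domain, and codomain data.
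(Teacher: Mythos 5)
Your proof is correct and takes essentially the same approach as the paper, which simply states that the lemma ``follows from a simple inspection''---your argument is that inspection written out: both composites are the quotient of $G \sqcup H \sqcup I$ by the same two identifications, and the interior-face and exterior-face anchoring data visibly agree. One small caveat: your side remark that $\dom_H$ and $\codom_H$ meet only at $s_H$ and $t_H$ need not hold for a general anchored graph (they may share intermediate vertices), but nothing is lost, since the quotient by the equivalence relation generated by the two identifications is independent of the order in which they are imposed, regardless of any such overlap.
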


With this lemma, we will safely omit parentheses when we write iterated vertical composites of anchored graphs.

\begin{definition}\label{def:pasting-scheme}
A \emph{pasting scheme} is an anchored graph $G$ together with a decomposition $G=G_n\cdots G_1$, called a \emph{pasting scheme presentation} of $G$, into vertical composites of $n \geq 1$ atomic graphs $G_1,\ldots,G_n$.
\end{definition}

Since the horizontal composition in a bicategory is not strictly associative, we need to equip the graphs with bracketings, which we define next.

\begin{definition}\label{def:bracketing}
\emph{Bracketings} are defined recursively as follows:
\begin{itemize}
\item The only bracketing of length $0$ is the empty sequence $\varnothing$. 
\item The only bracketing of length $1$ is the symbol $-$, called a dash.
\item If $b$ and $b'$ are bracketings of lengths $m$ and $n$, respectively, then $(bb')$ is a bracketing of length $m+n$.
\end{itemize}
We usually omit the outermost pair of parentheses, so the unique bracketing of length $2$ is $- -$.  A \emph{left normalized bracketing} is either $-$ or $(b)-$ with $b$ a left normalized bracketing.
\end{definition}

\begin{definition}\label{def:bracketing-directed-path}
For a directed path $P=v_0e_1v_1\cdots e_nv_n$ in a graph, a \emph{bracketing for $P$} is a choice of a bracketing $b$ of length $n$.  In this case, we write $b(P)$, called a \emph{bracketed directed path}, for the bracketed sequence obtained from $b$ by replacing its $n$ dashes with $e_1,\ldots,e_n$ from left to right.  If the bracketing is clear from the context, then we abbreviate $b(P)$ to $(P)$ or even $P$.  We sometimes suppress the vertices and write $P$ as $(e_1,\ldots,e_n)$, in which case $b(P)$ is also denoted by $b(e_1,\ldots,e_n)$. 
\end{definition}

\begin{example}\label{ex:bracketed-directed-path}
A directed path $P=(e_1,\ldots,e_n)$ with $0 \leq n \leq 2$ has a unique bracketing.  The only bracketings of length $3$ are $(- -)-$ and $-(- -)$.  The five bracketings of length $4$ are  $((- -)-)-$, $(- -)(- -)$, $-(-(- -))$, $(-(- -))-$, and $-((- -)-)$.  An induction shows that, for each $n\geq 1$, there is a unique left normalized bracketing of length $n$.  If $P=(e_1,e_2,e_3,e_4)$ is a directed path in a graph, then $b(P)$ for the five possible bracketings for $P$ are the bracketed sequences $((e_1e_2)e_3)e_4$, $(e_1e_2)(e_3e_4)$, $e_1(e_2(e_3e_4))$, $(e_1(e_2e_3))e_4$, and $e_1((e_2e_3)e_4)$.\dqed
\end{example}

\begin{definition}\label{def:bracketed-graph}
A \emph{bracketing} for an anchored graph $G$ consists of a bracketing $b$ for each of the directed paths $\dom_G$, $\codom_G$, $\dom_F$, and $\codom_F$ for each interior face $F$ of $G$.  An anchored graph $G$ with a bracketing is called a \emph{bracketed graph}.
\end{definition}

\begin{definition}\label{def:bracketed-graph-vcomp}
Suppose $G$ and $H$ are bracketed graphs such that:
\begin{itemize}
\item The vertical composite $HG$ of underlying anchored graphs is defined as in \Cref{def:anchored-composition}.
\item $(\codom_G) = (\dom_H)$ as bracketed directed paths.
\end{itemize} 
Then the anchored graph $HG$ is given the bracketing determined as follows:
\begin{itemize}
\item $(\dom_{HG}) = (\dom_G)$;
\item $(\codom_{HG}) = (\codom_H)$;
\item Each interior face $F$ of $HG$ is either an interior face of $G$ or an interior face of $H$, and not both.  Corresponding to these two cases, the directed paths $\dom_F$ and $\codom_F$ are bracketed as they are in $G$ or $H$.  
\end{itemize}
Equipped with this bracketing, $HG$ is called the \emph{vertical composite} of the bracketed graphs $G$ and $H$.
\end{definition}

\begin{remark}
  Note that interior faces of a bracketed graph may be bracketed incompatibly; this often arises in practice as we shall see.  Thus a bracketed graph may not decompose as a nontrivial composite, even if its underlying anchored graph does so.\dqed
\end{remark}

Vertical composition of bracketed graphs is strictly associative, so we will safely omit parentheses when we write iterated vertical composites of bracketed graphs.  Next is the graph theoretic version of a $2$-cell whiskered with a number of $1$-cells.

\begin{definition}\label{def:consistent-bracketing}
Suppose $G$ is an atomic graph with
\begin{itemize}
\item unique interior face $F$,
\item $P=(e_1,\ldots,e_m)$ the directed path from $s_G$ to $s_F$, and
\item $P'=(e'_1,\ldots,e'_n)$ the directed path from $t_F$ to $t_G$,
\end{itemize} 
as displayed below with each edge representing a directed path. 
\[\begin{tikzpicture}[scale=1]
\node [plain] (s) {$s_G$}; \node [left=.2cm of s](){$G=$};
\node [plain, right=1cm of s] (s1) {$s_{F}$}; 
\node [right=.5cm of s1] () {$F$};
\node [plain, right=1.5cm of s1] (t1) {$t_{F}$};
\node [plain, right=1cm of t1] (t) {$t_{G}$};
\draw [arrow] (s) to node{$P$} (s1); 
\draw [arrow, bend left=30] (s1) to node{$\dom_{F}$} (t1); 
\draw [arrow, bend right=30] (s1) to node[swap]{$\codom_{F}$} (t1);
\draw [arrow] (t1) to node{$P'$} (t);
\end{tikzpicture}\]
A bracketing for $G$ is \emph{consistent} if it satisfies both
\begin{equation}\label{consistent-bracketing}
\begin{split}
(\dom_G) &= b\bigl(e_1,\ldots,e_m, (\dom_F), e'_1,\ldots,e'_n\bigr),\\
(\codom_G) &= b\bigl(e_1,\ldots,e_m, (\codom_F), e'_1,\ldots,e'_n\bigr)
\end{split}
\end{equation}
for some bracketing $b$ of length $m+n+1$.  In $(\dom_G)$, the bracketed directed path $(\dom_F)$ is substituted into the $(m+1)$st dash in $b$, and similarly in $(\codom_G)$.  An atomic graph with a consistent bracketing is called a \emph{consistent graph}.
\end{definition}

As we will see later, the following kind of graphs are designed for the associator and its inverse in a bicategory.

\begin{definition}\label{def:associativity-graph}
An \emph{associativity graph} is a consistent graph in which the unique interior face $F$ satisfies one of the following two conditions:
\begin{equation}\label{associativity-graph1}
(\dom_{F}) = (E_1E_2)E_3 \andspace 
(\codom_{F}) = E_1'(E_2'E_3'),
\end{equation}
or 
\begin{equation}\label{associativity-graph2}
(\dom_{F}) = E_1(E_2E_3) \andspace 
(\codom_{F}) = (E_1'E_2')E_3'.
\end{equation}
Moreover, in each case and for each $1\leq i \leq 3$, $E_i$ and $E'_i$ are non-trivial bracketed directed paths with the same length and the same bracketing.
\end{definition}

\begin{definition}\label{def:bicategorical-pasting-scheme}
A \emph{composition scheme} is a bracketed graph $G$ together with a decomposition $G=G_n\cdots G_1$, called a \emph{composition scheme presentation} of $G$, into vertical composites of $n \geq 1$ consistent graphs $G_1,\ldots,G_n$.  
\end{definition}

If $G$ is a bracketed graph that admits a composition scheme presentation $G_n\cdots G_1$, then:
\begin{itemize}
\item $G$ has $n$ interior faces, one in each consistent graph $G_i$ for $1\leq i \leq n$.
\item Each $G_i$ has the same source and the same sink as $G$.  
\item For each $1\leq i \leq n-1$, $(\codom_{G_i}) = (\dom_{G_{i+1}})$ as bracketed directed paths.
\item $(\dom_{G}) = (\dom_{G_1})$ and $(\codom_{G}) = (\codom_{G_n})$.
\item If $1\leq i\leq j\leq n$, then $G_j\cdots G_i$ is a composition scheme.
\end{itemize}

\begin{remark}[Composition schemes in 2-categories]\label{remark:2-cat-pasting-scheme}
In 2-category theory, associators are identities and therefore one typically does not distinguish between the notions of pasting scheme and composition scheme.  However the distinction is important in bicategory theory precisely because associators are typically nontrivial.
The graphs one encounters in practice often do \emph{not} admit any composition scheme presentation due to mismatched bracketings.  However, they can be extended to composition schemes in the sense of the next two definitions.\dqed
\end{remark}


\begin{definition}\label{def:collapsing}
Suppose $G$ is a bracketed graph with a decomposition as $G = G_2AG_1$, $G_2A$, or $AG_1$ into a vertical composite of bracketed graphs in which $A$ is an associativity graph with unique interior face $F$.  Using the notations in \Cref{def:associativity-graph}, the bracketed graph obtained from $G$ by identifying each edge in $E_i$ with its corresponding edge in $E'_i$ for each $1\leq i \leq 3$, along with their corresponding tails and heads, is said to be obtained from $G$ by \emph{collapsing $A$}, denoted by $G/A$.
\end{definition}

In the context of \Cref{def:collapsing}:
\begin{itemize}
\item $(\dom_{G/A}) = (\dom_G)$ and $(\codom_{G/A}) = (\codom_{G})$.
\item The interior faces in $G/A$ are those in $G$ minus the interior face of $A$, and their (co)domains are bracketed as they are in $G$.  
\item Collapsing associativity graphs is a strictly associative operation.  So we can iterate the collapsing process without worrying about the order of the collapses.  
\item If $G$ originally has the form $G_2AG_1$, then the bracketed graph $G/A$ is \emph{not} the vertical composite $G_2G_1$ of the bracketed graphs $G_1$ and $G_2$ because
\[(\codom_{G_1}) = (\dom_A) \not= (\codom_A) = (\dom_{G_2})\] as bracketed directed paths.  However, forgetting the bracketings, the underlying anchored graph of $G/A$ is the vertical composite of the underlying anchored graphs of $G_1$ and $G_2$.\
\end{itemize}

\begin{definition}\label{def:bicat-pasting-extension}
Suppose $G$ is a bracketed graph.  A \emph{composition scheme extension} of $G$ consists of the following data.
\begin{enumerate}
\item A composition scheme $H=H_n\cdots H_1$ as in \Cref{def:bicategorical-pasting-scheme}.
\item A proper subsequence of associativity graphs $\{A_1,\ldots,A_j\}$ in $\{H_1,\ldots,H_n\}$ such that $G$ is obtained from $H$ by collapsing $A_1,\ldots,A_j$.
\end{enumerate}
In this case, we also denote the bracketed graph $G$ by $H/\{A_1,\ldots,A_j\}$.
\end{definition}

In the context of \Cref{def:bicat-pasting-extension}:
\begin{itemize}
\item $(\dom_{G}) = (\dom_H)$ and $(\codom_{G}) = (\codom_{H})$.
\item The interior faces in $G$ are those in $H$ minus those in $\{A_1,\ldots,A_j\}$, and their (co)domains are bracketed as they are in $H$.
\item The order in which the associativity graphs $A_1,\ldots,A_j$ are collapsed does not matter.
\end{itemize}

To characterize bracketed graphs that admit a composition scheme extension, we need the following observation about moving brackets via associativity graphs.

\begin{lemma}\label{moving-brackets}
Suppose $G$ is a bracketed atomic graph with interior face $F$ such that:
\begin{itemize}
\item $(\dom_G) = (\dom_F)$ and $(\codom_G) = (\codom_F)$ as bracketed directed paths.
\item $(\dom_G)$ and $(\codom_G)$ have the same length.
\end{itemize} 
Then one of the following two statements holds.
\begin{enumerate}
\item $(\dom_G) = (\codom_G)$.
\item There exists a canonical vertical composite $A_k\cdots A_1$ of associativity graphs such that $(\dom_{A_1}) = (\dom_G)$ and $(\codom_{A_k}) = (\codom_G)$.
\end{enumerate}
\end{lemma}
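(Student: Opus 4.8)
The plan is to treat the two bracketings $(\dom_G)$ and $(\codom_G)$ as two binary-tree shapes on the same ordered sequence of edges $e_1,\ldots,e_N$ (recall $N$ is the common length), and to build the chain of associativity graphs by a standard rebracketing argument, mirroring the classical proof that any two parenthesizations are connected by single-step applications of associativity. First I would observe that since both bracketed directed paths have the same underlying sequence of edges (they are $\dom_F$ and $\codom_F$ of a single atomic graph, which share source $s_F$ and sink $t_F$ and in this degenerate case coincide as edge sequences), the only difference between $(\dom_G)$ and $(\codom_G)$ is the placement of parentheses. If the two bracketings are identical, we are in case~(1) and there is nothing to prove. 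Otherwise I would produce a sequence of bracketings
\[
(\dom_G) = b_0,\ b_1,\ \ldots,\ b_k = (\codom_G),
\]
in which each $b_{i-1} \to b_i$ is a single application of the associativity rewrite, i.e.\ replacing one local subpattern $(E_1E_2)E_3$ by $E_1(E_2E_3)$ or vice versa, with $E_1,E_2,E_3$ nontrivial bracketed subpaths of equal length and bracketing on the two sides. Each such single step is exactly the data of one associativity graph $A_i$ as in \Cref{def:associativity-graph}: I would set $(\dom_{A_i}) = b_{i-1}$ and $(\codom_{A_i}) = b_i$, with the whiskering edges outside the rewritten subpattern providing the paths $P$ and $P'$ and hence the consistent (whiskered) bracketing required by \Cref{def:consistent-bracketing}. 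The composite $A_k\cdots A_1$ then satisfies $(\dom_{A_1}) = (\dom_G)$ and $(\codom_{A_k}) = (\codom_G)$, which is case~(2).

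The existence of such a rewrite sequence between any two parenthesizations of a fixed sequence is the combinatorial heart. I would establish it by induction on $N$. Since both $b_0$ and $b_k$ are bracketings of length $N\geq 3$ (the cases $N\leq 2$ force equality, giving case~(1)), I can first rewrite each of $(\dom_G)$ and $(\codom_G)$ into its unique left normalized bracketing $L$ of length $N$, using the existence and uniqueness of $L$ recorded in \Cref{def:bracketing} and \Cref{ex:bracketed-directed-path}. Concatenating the rewrites from $(\dom_G)$ to $L$ with the reverse of the rewrites from $(\codom_G)$ to $L$ gives the desired chain from $(\dom_G)$ to $(\codom_G)$; note that each associativity graph is reversible, in that swapping its domain and codomain yields another associativity graph (the two conditions \eqref{associativity-graph1} and \eqref{associativity-graph2} are exchanged), so reversing a rewrite step is legitimate. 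To rewrite an arbitrary bracketing $b$ into $L$, I induct: write $b = (b'b'')$ at its outermost split; repeatedly applying the right-to-left form of associativity moves parentheses leftward until the last edge $e_N$ is split off at the top level, i.e.\ $b$ is rewritten to a bracketing of the form $(c)\,e_N$ with $c$ a bracketing of length $N-1$; then apply the inductive hypothesis to rewrite $c$ into the left normalized bracketing of length $N-1$, which makes the whole thing left normalized.

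The main obstacle I anticipate is bookkeeping at the level of graphs rather than abstract bracketings: each rewrite must be realized by a genuine \emph{consistent} associativity graph, so at every step I must identify the rewritten local pattern $(E_1E_2)E_3$ (or $E_1(E_2E_3)$) as a bracketed subpath occurring inside $b_{i-1}$ at some position, check that $E_1,E_2,E_3$ are nontrivial with matching length and bracketing on both sides of the step, and package the surrounding, untouched part of the bracketing as the whiskering data $(e_1,\ldots,e_m,-,e'_1,\ldots,e'_n)$ that \Cref{def:consistent-bracketing} demands. The subtlety is that an associativity graph only rewrites at a single ternary node whose three children are arbitrary subtrees, so I must ensure the rewrite I use in the combinatorial induction is always of this exactly-ternary-at-one-node form, never a simultaneous rebracketing at several nodes; the left normalized reduction above is designed precisely so that each step touches one node. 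I expect that verifying the vertical composability $(\codom_{A_{i-1}}) = (\dom_{A_i})$ and the matching of endpoints is routine once the stepwise rewrite is set up, since $b_{i-1}=(\codom_{A_{i-1}})=(\dom_{A_i})=b_{i-1}$ by construction, and the word "canonical" in the statement refers to this explicit via-left-normalized-form construction.
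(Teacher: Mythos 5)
Your proposal is correct and takes essentially the same approach as the paper's proof: both connect $(\dom_G)$ and $(\codom_G)$ through the unique left normalized bracketing, using an induction on length whose inner loop applies one associativity move at a time to split off the last edge, and realizing each move as a whiskered associativity graph. Your reversal of the chain from $(\codom_G)$ to the left normalized form is exactly the paper's ``dual argument,'' since reversing an associativity graph exchanges the two forms \eqref{associativity-graph1} and \eqref{associativity-graph2}.
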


\begin{proof}
Suppose $(\dom_G)$ and $(\codom_G)$ have length $n$, and $b_n^l$ is the left normalized bracketing of length $n$.  First we consider the case where
\[(\codom_G) = b_n^l(e_1,\ldots,e_n) = b^l_{n-1}(e_1,\ldots,e_{n-1})e_n.\]  
We proceed by induction on $n$.  If $n\leq 2$, then there is a unique bracketing of length $n$, so $(\dom_G) = b_n^l$.  

Suppose $n\geq 3$.  Then $(\dom_G)=E_1E_2$ for some canonical, non-trivial bracketed directed paths $E_1$ and $E_2$.  If $E_2$ has length $1$ (i.e., it contains the single edge $e_n$), then the induction hypothesis applies with $E_1$ as the domain and $b^l_{n-1}(e_1,\ldots,e_{n-1})$ as the codomain.  Since adding an edge at the end of an associativity graph yields an associativity graph, we are done in this case.

If $E_2$ has length $> 1$, then it has the form $E_2 = E_{21}E_{22}$ for some canonical, non-trivial bracketed directed paths $E_{21}$ and $E_{22}$.  There is a unique associativity graph $A_1$ of the form \eqref{associativity-graph2} that satisfies
\[\begin{split}
(\dom_{A_1}) &= E_1(E_{21}E_{22}) = (\dom_G),\\
(\codom_{A_1}) &= (E_1E_{21})E_{22}.\end{split}\]  
Now we repeat the previous argument with $(\codom_{A_1})$ as the new domain.  This procedure must stop after a finite number of steps because $\dom_G$ has finite length.  When it stops, the right-most bracketed directed path $E_?$ has length $1$, so we can apply the induction hypothesis as above.  This finishes the induction.

An argument dual to the above shows that $b_n^l(e_1,\ldots,e_n)$ and $(\codom_G)$ are connected by a canonical finite sequence of associativity graphs of the form \eqref{associativity-graph1}.  Splicing the two vertical composites of associativity graphs together yields the desired vertical composite.
\end{proof}

The main result of this section is the following characterization of bracketed graphs that admit a composition scheme extension.

\begin{theorem}\label{bicat-pasting-existence}
For a bracketed graph $G$, the following two statements are equivalent.
\begin{enumerate}
\item $G$ admits a composition scheme extension.
\item The underlying anchored graph of $G$ admits a pasting scheme presentation.  
\end{enumerate}
\end{theorem}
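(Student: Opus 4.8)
The plan is to prove the two implications separately, with (1)$\Rightarrow$(2) being the straightforward direction and (2)$\Rightarrow$(1) carrying the real content. The first implication amounts to forgetting bracketings and observing that collapsing an associativity graph is, on underlying anchored graphs, nothing but vertical composition. The converse is where the work lies: one must manufacture a composition scheme by inserting associativity graphs to repair the inevitable bracketing mismatches, and this is exactly what \Cref{moving-brackets} is designed to enable.

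For (1)$\Rightarrow$(2), suppose $G = H/\{A_1,\dots,A_j\}$ for a composition scheme $H = H_n\cdots H_1$ as in \Cref{def:bicat-pasting-extension}. Each $H_i$ is a consistent graph, hence in particular an atomic graph, so the underlying anchored graphs already furnish a pasting scheme presentation $\underline{H} = \underline{H_n}\cdots\underline{H_1}$. I would then invoke the observation following \Cref{def:collapsing}: on underlying anchored graphs, collapsing an associativity graph is precisely vertical composition of the two neighboring pieces. Collapsing all of $A_1,\dots,A_j$ therefore leaves the vertical composite of the underlying anchored graphs of the \emph{non-collapsed} $H_i$, each of which is atomic. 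Since $\{A_1,\dots,A_j\}$ is a \emph{proper} subsequence, at least one $H_i$ survives, so this is a genuine pasting scheme presentation of $\underline{G}$.

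For (2)$\Rightarrow$(1), fix a pasting scheme presentation $\underline{G} = K_n\cdots K_1$ into atomic graphs. I would first promote each $K_i$ to a consistent graph $\widetilde{K}_i$ by keeping the face bracketings $(\dom_{F_i})$ and $(\codom_{F_i})$ supplied by $G$ and choosing any common outer whiskering bracketing $b_i$ (for instance the left normalized one, with the face treated as a single symbol); by construction this bracketing is consistent in the sense of \Cref{def:consistent-bracketing}. The trouble is that consecutive $\widetilde{K}_i$ need not compose: the two bracketings $(\codom_{\widetilde{K}_i})$ and $(\dom_{\widetilde{K}_{i+1}})$ of the shared intermediate path generally disagree, and likewise $(\dom_{\widetilde{K}_1})$ and $(\codom_{\widetilde{K}_n})$ need not match the given global bracketings $(\dom_G)$ and $(\codom_G)$. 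To repair each of these $n+1$ mismatches I would apply \Cref{moving-brackets} to the two competing bracketings of the relevant path: since these are bracketings of the same directed path of the same length, the lemma produces a vertical composite of associativity graphs interpolating between them (or else they already coincide, and nothing is inserted). Splicing the resulting strings of associativity graphs below, between, and above the $\widetilde{K}_i$ yields a genuine composition scheme $H$ in the sense of \Cref{def:bicategorical-pasting-scheme}, with global domain $(\dom_G)$ and global codomain $(\codom_G)$.

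It then remains to check that $G = H/\{A_1,\dots,A_j\}$, where the $A_\bullet$ are exactly the inserted associativity graphs. By the remarks following \Cref{def:collapsing} and \Cref{def:bicat-pasting-extension}, collapsing these graphs leaves the faces $F_1,\dots,F_n$ with their $G$-bracketings intact, preserves the global (co)domain bracketings, and produces as underlying anchored graph the vertical composite $K_n\cdots K_1 = \underline{G}$; hence the collapsed bracketed graph is $G$ on the nose. The inserted associativity graphs form a proper subsequence because every $\widetilde{K}_i$ survives, so this is a valid composition scheme extension. The main obstacle is the bracketing bookkeeping of (2)$\Rightarrow$(1)---guaranteeing that arbitrary bracketings of a path are connected by associativity graphs and that the splicing is compatible with collapsing---but this is precisely the content that \Cref{moving-brackets} isolates, so once that lemma is in hand the argument reduces to careful but routine verification.
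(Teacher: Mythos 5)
Your proposal is correct and takes essentially the same approach as the paper: the forward implication by forgetting bracketings and using that collapsing an associativity graph is vertical composition on underlying anchored graphs, and the converse by consistently bracketing each atomic piece (retaining the face bracketings from $G$), invoking \Cref{moving-brackets} to insert chains of associativity graphs at the $n+1$ potential bracketing mismatches, and observing that collapsing them recovers $G$. The only cosmetic difference is that the paper fixes the specific outer bracketing $\bigl((P_i)(\dom_{F_i})\bigr)(P_i')$ with $(P_i)$, $(P_i')$ left normalized, whereas you allow an arbitrary consistent choice; both work because \Cref{moving-brackets} connects arbitrary bracketings of the same length.
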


\begin{proof}
For the implication (1) $\Rightarrow$ (2), suppose $H = H_n\cdots H_1$ is a composition scheme.  By definition, this is also a pasting scheme presentation for the underlying anchored graph of $H$ because each consistent graph $H_i$ has an underlying atomic graph.  If $\{A_i\}_{1\leq i \leq j}$ is a proper subsequence of associativity graphs in $\{H_i\}_{1\leq i \leq n}$, then the vertical composite of the remaining underlying atomic graphs in \[\{H_i\}_{1\leq i \leq n} \setminus \{A_i\}_{1\leq i \leq j}\] is defined.  Moreover, it is a pasting scheme presentation for the underlying anchored graph of the bracketed graph $H/\{A_i\}_{1\leq i \leq j}$.  

For the implication (2) $\Rightarrow$ (1), suppose $G=G_m\cdots G_1$ is a pasting scheme presentation for the underlying anchored graph of $G$.  For each $1\leq i \leq m$, let $F_i$ denote the unique interior face of $G_i$, let $P_i$ denote the directed path in $G_i$ from $s_G$ to $s_{F_i}$, and let $P_i'$ denote the directed path in $G_i$ from $t_{F_i}$ to $t_G$.  Equip $G_i$ with the consistent bracketing in which:
\begin{itemize}
\item $(\dom_{F_i})$ and $(\codom_{F_i})$ are bracketed as they are in $G$;
\item $(\dom_{G_i}) = \bigl((P_i)(\dom_{F_i})\bigr)(P_i')$;
\item $(\codom_{G_i}) = \bigl((P_i)(\codom_{F_i})\bigr)(P_i')$.
\end{itemize}
Here $(P_i)$ and $(P_i')$ are either empty or left normalized bracketings.  By \Cref{moving-brackets}:
\begin{itemize}
\item Either $(\dom_G) = (\dom_{G_1})$, or else there is a vertical composite of associativity graphs $A_{1k_1}\cdots A_{11}$ with domain $(\dom_G)$ and codomain $(\dom_{G_1})$.
\item For each $2\leq  i \leq m$, either $(\codom_{G_{i-1}}) = (\dom_{G_{i}})$, or else there is a vertical composite of associativity graphs $A_{ik_i}\cdots A_{i1}$ with domain $(\codom_{G_{i-1}})$ and codomain $(\dom_{G_{i}})$.
\item Either $(\codom_{G_m}) = (\codom_G)$, or else there is a vertical composite of associativity graphs $A_{m+1,k_{m+1}}\cdots A_{m+1,1}$ with domain $(\codom_{G_m})$ and codomain $(\codom_G)$.
\end{itemize} 
The corresponding vertical composite
\[H = \overbracket[0.5pt]{(A_{m+1,k_{m+1}}\cdots A_{m+1,1})}^{\text{or $\varnothing$}} G_m \cdots \overbracket[0.5pt]{(A_{2k_2}\cdots A_{21})}^{\text{or $\varnothing$}} G_1 \overbracket[0.5pt]{(A_{1k_1}\cdots A_{11})}^{\text{or $\varnothing$}}\]
is a composition scheme.  Moreover, $G$ is obtained from $H$ by collapsing all the associativity graphs $A_{ij}$ for $1\leq i \leq m+1$ and $1 \leq j \leq k_i$.
\end{proof}


\section{Pasting Diagrams and Composition Diagrams}\label{sec:pasting-diagrams}

In this section we apply the graph theoretic concepts in the previous
section to define pasting diagrams and composition diagrams in
bicategories.
We begin with the definition of a bicategory.  In what follows, $\boldone$ denotes the discrete category with one object $*$.  For a category $\C$, we identify the categories $\C\times \boldone$ and $\boldone \times \C$ with $\C$, and regard the canonical isomorphisms between them as $\Id_{\C}$.

\begin{definition}\label{def:bicategory}
A \emph{bicategory} is a tuple $\bigl(\B, 1, c, a, \ell, r\bigr)$ consisting of the following data.
\begin{enumerate}[label=(\roman*)]
\item $\B$ is equipped with a collection $\Ob(\B) = \B_0$, whose elements are called \emph{objects} in $\B$.  If $X\in \B_0$, we also write $X\in \B$.
\item For each pair of objects $X,Y\in\B$, $\B$ is equipped with a category $\B(X,Y)$, called a \emph{hom category}.
\begin{itemize}
\item Its objects are called \emph{$1$-cells}, and its morphisms are called \emph{$2$-cells} in $\B$.
\item Composition and identity morphisms in $\B(X,Y)$ are called \emph{vertical composition} and \emph{identity $2$-cells}, respectively.
\item For a $1$-cell $f$, its identity $2$-cell is denoted by $1_f$.
\end{itemize}
\item For each object $X\in\B$, $1_X : \boldone \to \B(X,X)$ is a functor, which we identify with the $1$-cell $1_X(*)\in\B(X,X)$, called the \emph{identity $1$-cell of $X$}.
\item For each triple of objects $X,Y,Z \in \B$, 
\[c_{XYZ} : \B(Y,Z) \times \B(X,Y) \to \B(X,Z)\]
is a functor, called the \emph{horizontal composition}.  For $1$-cells $f \in \B(X,Y)$ and $g \in \B(Y,Z)$, and $2$-cells $\alpha \in \B(X,Y)$ and $\beta \in \B(Y,Z)$, we use the notations
\[c_{XYZ}(g,f) = gf \andspace c_{XYZ}(\beta,\alpha) = \beta * \alpha.\]
\item For objects $W,X,Y,Z \in \B$, 
\[a_{WXYZ} : c_{WXZ} \bigl(c_{XYZ} \times \Id_{\B(W,X)}\bigr) \to c_{WYZ}\bigl(\Id_{\B(Y,Z)} \times c_{WXY}\bigr)\]
is a natural isomorphism, called the \emph{associator}.
\item For each pair of objects $X,Y\in \B$,
\[\begin{tikzcd}
c_{XYY} \bigl(1_Y \times \Id_{\B(X,Y)}\bigr) \rar{\ell_{XY}} & \Id_{\B(X,Y)}
& c_{XXY}\bigl(\Id_{\B(X,Y)} \times 1_X\bigr) \lar[swap]{r_{XY}}\end{tikzcd}\]
are natural isomorphisms, called the \emph{left unitor} and the \emph{right unitor}, respectively.
\end{enumerate}
The subscripts in $c$ will often be omitted.  The subscripts in $a$, $\ell$, and $r$ will often be used to denote their components.  The above data is required to satisfy the following two axioms for $1$-cells $f \in \B(V,W)$, $g \in \B(W,X)$, $h \in \B(X,Y)$, and $k \in \B(Y,Z)$.
\begin{description}
\item[Unity Axiom] The middle unity diagram
\[\begin{tikzcd}[column sep=small] (g 1_W)f \arrow{rr}{a} \arrow{rd}[swap]{r_g * 1_f} && g(1_W f) \arrow{ld}{1_g * \ell_f}\\ & gf
\end{tikzcd}\]
in $\B(V,X)$ is commutative.
\item[Pentagon Axiom]
The diagram
\begin{equation}\label{bicat-pentagon}
\begin{tikzpicture}[commutative diagrams/every diagram]
\node (P0) at (90:2cm) {$(kh)(gf)$};
\node (P1) at (90+72:2cm) {$((kh)g)f$} ;
\node (P2) at (220:1.5cm) {\makebox[3ex][r]{$(k(hg))f$}};
\node (P3) at (-40:1.5cm) {\makebox[3ex][l]{$k((hg)f)$}};
\node (P4) at (90+4*72:2cm) {$k(h(gf))$};
\path[commutative diagrams/.cd, every arrow, every label]
(P0) edge node {$a_{k,h,gf}$} (P4)
(P1) edge node {$a_{kh,g,f}$} (P0)
(P1) edge node[swap] {$a_{k,h,g} * 1_f$} (P2)
(P2) edge node {$a_{k,hg,f}$} (P3)
(P3) edge node[swap] {$1_k * a_{h,g,f}$} (P4);
\end{tikzpicture}
\end{equation}
in $\B(V,Z)$ is commutative.
\end{description}
This finishes the definition of a bicategory.
\end{definition}

\begin{remark}\label{expl:bicategory}
Suppose $\B$ is a bicategory.
\begin{itemize}
\item We assume the hom categories $\B(X,Y)$ for objects $X,Y\in\B$ are disjoint.  If not, we tacitly replace them with their disjoint union.
\item For $2$-cells $\alpha : f \to f'$, $\alpha' : f' \to f''$, and $\alpha'' : f'' \to f'''$ in $\B(X,Y)$, there are equalities
\begin{equation}\label{hom-category-axioms}
(\alpha'' \alpha') \alpha = \alpha'' (\alpha' \alpha) \andspace
\alpha= \alpha 1_f = 1_{f'} \alpha.
\end{equation}
\item With the usual notation
\[\begin{tikzpicture}[commutative diagrams/every diagram]
\node (X) at (-1,0) {$X$}; \node (Y) at (1,0) {$Y$};
\node[font=\Large] at (-.1,0) {\rotatebox{270}{$\Rightarrow$}}; 
\node at (.15,0) {$\alpha$};
\path[commutative diagrams/.cd, every arrow, every label] 
(X) edge [bend left] node[above] {$f$} (Y)
edge [bend right] node[below] {$f'$} (Y);
\end{tikzpicture}\]
for a $2$-cell $\alpha : f \to f'$, the horizontal composition $c_{XYZ}$ is the assignment:
\[\begin{tikzpicture}[commutative diagrams/every diagram]
\node (X) at (-1,0) {$X$}; \node (Y) at (1,0) {$Y$}; \node (Z) at (3,0) {$Z$};
\node (X1) at (5,0) {$X$}; \node (Z1) at (7,0) {$Z$};
\node[font=\Large] at (-.1,0) {\rotatebox{270}{$\Rightarrow$}}; 
\node at (.15,0) {$\alpha$};
\node[font=\Large] at (1.9,0) {\rotatebox{270}{$\Rightarrow$}}; 
\node at (2.15,0) {$\beta$};
\node[font=\Large] at (5.7,0) {\rotatebox{270}{$\Rightarrow$}}; 
\node at (6.2,0) {$\beta*\alpha$};
\node at (4,0) {$\mapsto$};
\path[commutative diagrams/.cd, every arrow, every label] 
(X) edge [bend left] node[above] {$f$} (Y)
(X) edge [bend right] node[below] {$f'$} (Y)
(Y) edge [bend left] node[above] {$g$} (Z)
(Y) edge [bend right] node[below] {$g'$} (Z)
(X1) edge [bend left] node[above] {$gf$} (Z1)
(X1) edge [bend right] node[below] {$g'f'$} (Z1);
\end{tikzpicture}\]
\item There are equalities
\begin{equation}\label{bicat-c-id}
1_g * 1_f = 1_{gf}
\end{equation}
in $\B(X,Z)(gf,gf)$, and
\begin{equation}\label{middle-four}
(\beta'\beta) * (\alpha'\alpha) = (\beta'*\alpha')(\beta*\alpha)
\end{equation}
in $\B(X,Z)(gf,g''f'')$ for $1$-cells $f'' \in \B(X,Y)$, $g'' \in \B(Y,Z)$ and $2$-cells $\alpha' : f'\to f''$, $\beta' : g' \to g''$.\dqed
\end{itemize}
\end{remark}

We now apply the graph theoretic concepts above to bicategories.

\begin{definition}\label{def:bicat-pasting-diagram}
Suppose $\B$ is a bicategory, and $G$ is a bracketed graph.  
\begin{enumerate}
\item A \emph{$1$-skeletal $G$-diagram} in $\B$ is an assignment $\phi$ as follows.
\begin{itemize}
\item $\phi$ assigns to each vertex $v$ in $G$ an object $\phi_v$ in $\B$.
\item $\phi$ assigns to each edge $e$ in $G$ with tail $u$ and head $v$ a $1$-cell $\phi_e \in \B(\phi_u,\phi_v)$. 
\end{itemize}
\item Suppose $\phi$ is such a $1$-skeletal $G$-diagram, and $P = v_0e_1v_1\cdots e_mv_m$ is a directed path in $G$ with $m \geq 1$ and with an inherited bracketing $(P)$.  Define the $1$-cell 
\begin{equation}\label{phi-directed-path}
\phi_P \in \B(\phi_{v_0},\phi_{v_m})
\end{equation}
as follows.
\begin{itemize}
\item First replace the edge $e_i$ in $(P)$ by the $1$-cell $\phi_{e_i} \in \B(\phi_{v_{i-1}},\phi_{v_i})$ for $1\leq i \leq m$.
\item Then form the horizontal composite of the resulting parenthesized sequence $\begin{tikzcd}\phi_{v_0} \ar{r}{\phi_{e_1}} & \phi_{v_1} \ar{r}{\phi_{e_2}} & \cdots \ar{r}{\phi_{e_m}} & \phi_{v_m}\end{tikzcd}$
of $1$-cells.
\end{itemize}
\item A \emph{$G$-diagram} in $\B$ is a $1$-skeletal $G$-diagram $\phi$ in $\B$ that assigns to each interior face $F$ of $G$ a $2$-cell $\phi_F : \phi_{\dom_F} \to \phi_{\codom_F}$ in $\B(\phi_{s_F},\phi_{t_F})$.
\item A $G$-diagram is called a \emph{composition diagram} of shape $G$ if $G$ admits a composition scheme presentation.

\item A $G$-diagram is called a \emph{pasting diagram} if the
  underlying anchored graph admits a pasting scheme presentation.
  Equivalently, by \cref{bicat-pasting-existence}, a $G$-diagram
  $\phi$ is a pasting diagram in $\B$ if and only if $G$ admits a
  composition scheme extension.
\end{enumerate}
\end{definition}
\begin{remark}[Pasting diagrams in 2-categories]\label{remark:2cat-pasting-diagram}
  Suppose $\B$ is a 2-category, regarded as a bicategory, and let
  $\B'$ denote its underlying 2-category. If $G$ is a bracketed graph
  and $\phi$ a $G$-diagram in $\B$, let $G'$ denote the underlying
  anchored graph of $G$ and let $\phi'$ denote the corresponding
  $G'$-diagram in $\B'$.  Then $\phi$ is a pasting diagram in $\B$ if
  and only if $\phi'$ is a pasting diagram in $\B'$.\dqed
\end{remark}

\begin{definition}\label{def:bicat-pasting-composite}
Suppose $\phi$ is a composition diagram of shape $G$ in a bicategory $\B$ and suppose $G_n\cdots G_1$ is a composition scheme presentation of $G$.
\begin{enumerate}
\item For each $1\leq i \leq n$, the \emph{constituent} 2-cell for $G_i$, denoted by $\phi_{G_i}$, is defined as follows.  Suppose $G_i$ has:
\begin{itemize}
\item unique interior face $F_i$;
\item directed path $P_i = (e_{i1},\ldots,e_{ik_i})$ from $s_G$ to $s_{F_i}$;
\item directed path $P_i' = (e'_{i1},\ldots,e'_{il_i})$ from $t_{F_i}$ to $t_G$.
\end{itemize}  
By \eqref{consistent-bracketing} the bracketing of the consistent graph $G_i$ satisfies
\[\begin{split}
(\dom_{G_i}) &= b_i\bigl(e_{i1},\ldots,e_{ik_i}, (\dom_{F_i}), e'_{i1},\ldots,e'_{il_i}\bigr),\\
(\codom_{G_i}) &= b_i\bigl(e_{i1},\ldots,e_{ik_i}, (\codom_{F_i}), e'_{i1},\ldots,e'_{il_i}\bigr)
\end{split}\]
for some bracketing $b_i$ of length $k_i+l_i+1$.  Then we define the $2$-cell 
\begin{equation}\label{basic-2cell}
\phi_{G_i} = b_i\bigl(1_{\phi_{e_{i1}}},\ldots, 1_{\phi_{e_{ik_i}}}, \phi_{F_i}, 1_{\phi_{e'_{i1}}},\ldots, 1_{\phi_{e'_{il_i}}}\bigr) : \phi_{\dom_{G_i}} \to \phi_{\codom_{G_i}}
\end{equation} 
in $\B(\phi_{s_G},\phi_{t_G})$ where:
\begin{itemize}
\item The identity $2$-cell of each $\phi_{e_{ij}}$ is substituted for $e_{ij}$ in $b_i$, and similarly for the identity $2$-cell of each $\phi_{e'_{ij}}$.  
\item The $2$-cell $\phi_{F_i}$ is substituted for the $(k_i+1)$st entry in $b_i$.
\item $\phi_{G_i}$ is the iterated horizontal composite of the resulting bracketed sequence of $2$-cells, with the horizontal compositions determined by the brackets in $b_i$.
\end{itemize}
\item The \emph{composite of $\phi$} with respect to $G_n \cdots G_1$, denoted by $|\phi|$, is defined as the vertical composite
\begin{equation}\label{pasting-diagram-composite}
\begin{tikzcd}[column sep=huge]
\phi_{\dom_G} = \phi_{\dom_{G_1}} \ar{r}{|\phi| \,=\, \phi_{G_n}\cdots \phi_{G_1}} & \phi_{\codom_{G_n}}=\phi_{\codom_G},\end{tikzcd}
\end{equation} 
which is a $2$-cell in $\B(\phi_{s_G},\phi_{t_G})$.
\end{enumerate}
\end{definition}

\begin{example}\label{ex:bicat-pasting-2}
Suppose given a $G$-diagram $\phi$ in $\B$, as displayed on the left below.  The underlying anchored graph $G$ has a unique bracketing because, in all three interior faces and the exterior face, the domain and the codomain have at most two edges.  The bracketed graph $G$ does \emph{not} admit a composition scheme presentation.
\[
\begin{tikzpicture}[x=20mm,y=20mm]
  \newcommand{\core}{
    \draw[0cell] 
    (0,0) node (v) {V}
    (1,0) node (s) {S}
    (1.75,.5) node (u) {U}
    (1.75,-.5) node (w) {W}
    (2.5,0) node (t) {T}
    ;
    \draw[1cell] 
    (s) edge[swap] node {h_2} (u)
    (s) edge node {h_3} (w)
    (u) edge node {f_2} (t)
    (w) edge[swap] node {g_2} (t)
    ;
    \draw[2cell] 
    node[between=s and t at .6, rotate=-90,font=\Large] (T2) {\Rightarrow}
    (T2) node[right] {\theta_2}  
    ;
  }
  \begin{scope}
    \core
    \draw [->, line join=round, decorate, decoration={zigzag, segment
        length=4, amplitude=1, post=lineto, post length=2pt}]
    (t) ++(.25,0) -- ++(0.45,0);
    \draw[0cell]
    (v) ++(.1,.75) node {\phi}
    ;
    \draw[1cell] 
    (v) edge node[pos=.4] {h_1} (s)
    (v) edge[bend left] node (f1) {f_1} (u)
    (v) edge[bend right, swap] node (g1) {g_1} (w)
    ;
    \draw[2cell] 
    node[between=f1 and s at .6, rotate=-45,font=\Large] (T1) {\Rightarrow}
    (T1) node[above right] {\theta_1}
    node[between=g1 and s at .6, rotate=225,font=\Large] (T3) {\Rightarrow}
    (T3) node[below right] {\theta_3}
    ;
  \end{scope}
  \begin{scope}[shift={(3.55,0)}]
    \core
    \draw[0cell] 
    (v) ++(-.1,.9) node {\phi'}
    (s) ++(0,.6) node (s') {S}
    (u) ++(0,.6) node (u') {U}
    (s) ++(0,-.6) node (s'') {S}
    (w) ++(0,-.6) node (w'') {W}
    ;
    \draw[1cell] 
    (v) edge node[pos=.65] {h_1} (s)
    (v) edge node[pos=.5] {h_1} (s')
    (v) edge[swap] node[pos=.5] {h_1} (s'')
    (s') edge node {h_2} (u')
    (s'') edge[swap] node {h_3} (w'')
    (u') edge[bend left=40] node {f_2} (t)
    (w'') edge[swap, bend right=35] node {g_2} (t)
    (v) edge[bend left=50, looseness=1.2] node (f1') {f_1} (u')
    (v) edge[bend right=50, looseness=1.2, swap] node (g1') {g_1} (w'')
    ;
    \draw[2cell] 
    node[between=f1' and s' at .6, rotate=-45,font=\Large] (T1) {\Rightarrow}
    (T1) node[above right] {\theta_1}
    node[between=g1' and s'' at .6, rotate=225,font=\Large] (T3) {\Rightarrow}
    (T3) node[below right] {\theta_3}
    (u) ++(130:.3) node[rotate=-45,font=\Large] (a') {\Rightarrow}
    (a') node[below left] {\,a^\inv}
    (w) ++(230:.3) node[rotate=225,font=\Large] (a) {\Rightarrow}
    (a) node[above left] {a}
    ;
  \end{scope}
\end{tikzpicture}
\]
The composite of $\phi$ is not defined in general because
\[\begin{split}
\codom(1_{f_2}*\theta_1) = f_2(h_2h_1) & \not= (f_2h_2)h_1 = \dom(\theta_2*1_{h_1}),\\
\codom(\theta_2*1_{h_1}) = (g_2h_3)h_1 &\not= g_2(h_3h_1) = \dom(1_{g_2}*\theta_3).\end{split}\]
We can fix the mismatched bracketings by:
\begin{itemize}
\item expanding $G$ into a composition scheme $G'$ by inserting two associativity graphs, one of the form \eqref{associativity-graph1} and the other \eqref{associativity-graph2}; 
\item inserting instances of the associator $a$ or its inverse $a^{-1}$ to obtain the composition diagram $\phi'$ of shape $G'$ on the right above.  
\end{itemize}
The composite of $\phi$ may now be defined as the vertical composite
\[\begin{tikzcd}
f_2f_1 \ar{rrr}{|\phi'|} \ar{d}[swap]{1_{f_2}*\theta_1} &&& g_2g_1\\
f_2(h_2h_1) \ar{r}{a^{-1}} & (f_2h_2)h_1 \ar{r}{\theta_2*1_{h_1}} & (g_2h_3)h_1 \ar{r}{a} & g_2(h_3h_1) \ar{u}[swap]{1_{g_2}*\theta_3} \end{tikzcd}\]
of $2$-cells in $\B(V,T)$.
\dqed
\end{example}

The essential idea demonstrated in \cref{ex:bicat-pasting-2} works in general to extend a pasting diagram to a composition diagram.  We explain this in the following two definitions.

\begin{definition}\label{def:associator-or-inverse}
Suppose $\phi$ is a $1$-skeletal $A$-diagram in a bicategory $\B$ for some associativity graph $A$.
\begin{enumerate}
\item We call $\phi$ \emph{extendable} if, using the notations in \Cref{def:associativity-graph}, for each $1\leq i \leq 3$ and each edge $e$ in $E_i$ with corresponding edge $e'$ in $E'_i$, there is an equality of $1$-cells $\phi_e = \phi_{e'}$.  As defined in \eqref{phi-directed-path}, this implies the equality $\phi_{E_i} = \phi_{E'_i}$ of composite $1$-cells.  
\item Suppose $\phi$ is extendable.  The \emph{canonical extension of $\phi$} is the $A$-diagram that assigns to the unique interior face $F$ of $A$ the $2$-cell
\[\begin{tikzcd}[column sep=huge]
\phi_{\dom_F} = \phi_{E_3}(\phi_{E_2}\phi_{E_1}) \ar{r}{\phi_F \,=\, a^{-1}} & (\phi_{E'_3}\phi_{E'_2})\phi_{E'_1} = \phi_{\codom_F}\end{tikzcd}\]
if $A$ satisfies \eqref{associativity-graph1}, or
\[\begin{tikzcd}[column sep=huge]
\phi_{\dom_F} = (\phi_{E_3}\phi_{E_2})\phi_{E_1} \ar{r}{\phi_F \,=\, a} & \phi_{E'_3}(\phi_{E'_2}\phi_{E'_1}) = \phi_{\codom_F}\end{tikzcd}\]
if $A$ satisfies \eqref{associativity-graph2}.
\end{enumerate}
\end{definition}

\begin{example}
In \Cref{ex:bicat-pasting-2} the composition diagram $\phi'$ involves two canonical extensions of restrictions of $\phi$, one for each of $a$ and $a^{-1}$.\dqed 
\end{example}

\begin{definition}\label{def:bicat-diagram-composite}
  Suppose that $\phi$ is a pasting diagram of shape $G$ in a bicategory $\B$, and suppose $H=H_n\cdots H_1$ is a composition scheme extension of $G$.
  The \emph{composite} of $\phi$ with respect to $H=H_n\cdots H_1$, denoted by $|\phi|$, is defined as follows.
\begin{enumerate}
\item First define the composition diagram $\phi_H$ of shape $H$ by the following data:
\begin{itemize}
\item The restriction of $\phi_H$ to $(\dom_H)$ is $(\dom_G)$; to $(\codom_H)$ is $(\codom_G)$; and to the interior faces in $G$, agrees with $\phi$.
\item For each $1\leq i \leq j$, the restriction of $\phi_H$ to the associativity graph $A_i$ is extendable.  The value of $\phi_H$ at the unique interior face of $A_i$ is given by the canonical extension described in \Cref{def:associator-or-inverse}(2).  That is, it is either a component of the associator $a$ or its inverse.
\end{itemize}
\item Now we define the $2$-cell $|\phi|$ in $\B(\phi_{s_G},\phi_{t_G})$ by  
\[\begin{tikzcd}[column sep=large]
\phi_{\dom_G} \ar{r}{|\phi| \,=\, |\phi_H|} & \phi_{\codom_G},\end{tikzcd}\] where $|\phi_H|$ is the composite of $\phi_H$ as in \eqref{pasting-diagram-composite} with respect to $H_n\cdots H_1$.
\end{enumerate}
\end{definition}


\section{Bicategorical Pasting Theorem}
\label{sec:bicat-pasting-theorem}

In this section we prove the Bicategorical Pasting Theorem \ref{thm:bicat-pasting-theorem}.
Existence of a composite follows from \cref{bicat-pasting-existence}
and \cref{def:bicat-diagram-composite}.
The
majority of the remaining work is to show, for a pasting diagram $\phi$ of shape $G$ in a bicategory, the composites with respect to any two composition scheme extensions of $G$ are equal.  The proof of this result restricts to $2$-categories and yields essentially Power's pasting theorem for $2$-categories.

We begin with an adaptation of Mac Lane's Coherence Theorem to this context.
\begin{theorem}[Mac Lane's Coherence]\label{maclane-coherence}
Suppose:
\begin{enumerate}
\item $G=A_k\cdots A_1$ and $G'=A'_l\cdots A'_1$ are composition schemes such that:
\begin{itemize}
\item All the $A_i$ and $A_j'$ are associativity graphs.
\item $(\dom_G) = (\dom_{G'})$ and $(\codom_G) = (\codom_{G'})$ as bracketed directed paths.
\end{itemize} 
\item $\phi$ is a $1$-skeletal $G$-diagram in $\B$ whose restriction to each $A_i$ is extendable.  With the canonical extension of $\phi$ in each $A_i$, the resulting composition diagram of shape $G$ is denoted by $\phibar$.
\item $\phi'$ is a $1$-skeletal $G'$-diagram in $\B$ whose restriction to each $A'_j$ is extendable.  With the canonical extension of $\phi'$ in each $A'_j$, the resulting composition diagram of shape $G'$ is denoted by $\phibar'$.
\item $\phi_{e} = \phi'_{e}$ for each edge $e$ in $\dom_G$.
\end{enumerate}
Then there is an equality $|\phibar| = |\phibar'|$ of composite $2$-cells in $\B(\phi_{s_G},\phi_{t_G})$.
\end{theorem}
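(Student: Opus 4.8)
The plan is to reduce the statement to the classical Mac Lane coherence theorem for monoidal categories, reinterpreting each composite $|\phibar|$ and $|\phibar'|$ as a formal bracket-rearrangement built solely from associators. First I would observe that, by hypothesis (4) together with the fact that $(\dom_G) = (\dom_{G'})$ as bracketed directed paths, the two $1$-skeletal diagrams agree on edges, so $\phi_e = \phi'_e$ for every edge $e$ appearing anywhere in $G$ or $G'$; consequently the source $1$-cell $\phi_{\dom_G} = \phi_{\dom_{G'}}$ and the target $1$-cell $\phi_{\codom_G} = \phi_{\codom_{G'}}$ coincide. I would then unwind \cref{def:bicat-pasting-composite} and \cref{def:associator-or-inverse}: since every constituent $2$-cell $\phibar_{A_i}$ is a whiskering of a single associator (or its inverse) by identity $2$-cells, and the middle-four interchange \eqref{middle-four} together with \eqref{bicat-c-id} shows that such a whiskered associator is exactly a naturality component of $a$ at the relevant $1$-cells, each composite $|\phibar|$ is a vertical composite of associator-components. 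In other words, $|\phibar|$ is precisely one of the canonical isomorphisms produced by Mac Lane's coherence in the free monoidal category on the ordered list of $1$-cells $(\phi_{e_1}, \ldots, \phi_{e_N})$ underlying $\dom_G$.

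The key technical step is to make this last identification rigorous, and this is where I expect the main obstacle to lie. The difficulty is that the classical coherence theorem is usually stated for the associativity pentagon alone, whereas here the constituents $A_i$ also carry whiskering data (the directed paths $P_i$, $P_i'$ flanking the interior face), so I must verify that whiskering an associativity graph by edges corresponds, under $\phi$, to whiskering the associator by the horizontal composites $\phi_{P_i}$, $\phi_{P_i'}$. The clean way to handle this is to package the ordered edge-sequence of $\dom_G$ as a word in a free monoid and to set up a functor from the free monoidal category on the underlying objects into the relevant hom-category-with-horizontal-composition, sending each formal bracketing of the word to the corresponding iterated horizontal composite and each elementary reassociation to the appropriate associator component. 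The pentagon and unity axioms of \cref{def:bicategory} are exactly what guarantees this assignment is well-defined and respects the coherence relations.

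With that functor in hand, the argument finishes quickly. Both $G = A_k \cdots A_1$ and $G' = A'_l \cdots A'_1$ are, after applying $\phi$, directed paths of elementary reassociations in the free monoidal category from the common bracketing $(\dom_G)$ to the common bracketing $(\codom_G)$; Mac Lane's theorem asserts that any two such paths compose to the same morphism, and transporting this equality along the functor yields $|\phibar| = |\phibar'|$ in $\B(\phi_{s_G}, \phi_{t_G})$. I would phrase the invocation of coherence carefully so as to cite only the statement that the free monoidal category on a set is a preorder (equivalently, that all formal associativity isomorphisms between two fixed bracketings of the same word are equal), rather than any stronger structural result, keeping the proof self-contained and consistent with the paper's stated aim of relying only on the bicategory axioms.
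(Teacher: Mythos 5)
Your overall strategy---reducing the statement to coherence for monoidal categories---is the same as the paper's, and your preliminary observations are fine (extendability does propagate the edge labels from $\dom_G$ through all of $G$ and $G'$, so the two diagrams agree on every edge and on the composite (co)domain $1$-cells). But there is a genuine gap at exactly the step you flag as the key one. You propose to ``set up a functor from the free monoidal category on the underlying objects into the relevant hom-category-with-horizontal-composition.'' No such target exists: the $1$-cells $\phi_{e_1},\dots,\phi_{e_n}$ lie in \emph{different} hom-categories $\B(\phi_{v_{i-1}},\phi_{v_i})$, and horizontal composition $c_{XYZ}\colon \B(Y,Z)\times\B(X,Y)\to\B(X,Z)$ is not a tensor product on any single category, so there is no monoidal category through which to apply the universal property of the free monoidal category. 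Your ``functor'' must therefore be a partially defined, hand-built evaluation of formal words and formal canonical morphisms, and the burden of proof is to show it is constant on the congruence classes that constitute the morphisms of the free gadget. That verification is not supplied by ``the pentagon and unity axioms,'' as you assert: the unity axiom is irrelevant here (associativity graphs have all $E_i$ non-trivial, so no units or unitors occur---indeed one should work with the associativity-only fragment of coherence, lest the congruence pass through unit-involving expressions on which your evaluation is undefined). What is actually needed is that every generating relation maps to a valid equation in $\B$: the hom-category axioms \eqref{hom-category-axioms}, functoriality of horizontal composition (that is, \eqref{bicat-c-id} and the middle-four interchange \eqref{middle-four}), naturality of the associator $a$, and the Pentagon Axiom \eqref{bicat-pentagon}. (A smaller slip: a whiskering such as $1_{\phi_P} * a$ is not ``a naturality component of $a$''; it is the evaluation of a basic arrow $1\otimes\alpha$, which coherence covers, but which is not itself an instance of the associator.)

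Once those verifications are written out, you will have reproduced almost exactly the paper's proof, which does not quote Mac Lane's theorem as a black box but instead transplants Mac Lane's \emph{argument} (CWM, p.~166--168) into $\B$ via a dictionary: $\otimes$-words become bracketings of $(\phi_{e_1},\dots,\phi_{e_n})$, instances of $\alpha$ become components of $a$, basic arrows become the whiskered $2$-cells $\phibar_{A}$, composites of basic arrows become vertical composites, bifunctoriality of $\otimes$ becomes functoriality of $c$, and the monoidal pentagon becomes \eqref{bicat-pentagon}. The reason for re-running the proof rather than citing the statement is precisely the obstruction above: with no monoidal functor available to transport the conclusion, the coherence relations have to be checked inside $\B$ one way or another. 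Your proposal is salvageable---either by carrying out the syntactic well-definedness check honestly, or by dropping the functor language and following the proof-transplant---but as written the central step is asserted rather than proved, and the assertion of what makes it work is not correct as stated.
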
  

\begin{proof}
The desired equality is
\begin{equation}\label{desired-equality}
  \phibar_{A_k}\cdots\phibar_{A_1} = \phibar'_{A'_l}\cdots\phibar'_{A'_1}
\end{equation}
with
\begin{itemize}
\item each side a vertical composite as in \eqref{pasting-diagram-composite}, and
\item $\phibar_{A_i}$ and $\phibar'_{A'_j}$ horizontal composites as in \eqref{basic-2cell}.
\end{itemize}  
The proof that these are equal is adapted as follows from the proof of Mac Lane's Coherence Theorem for monoidal categories in \cite{maclane} (p.166-168), which characterizes the free monoidal category on one object.
\begin{itemize}
\item Suppose the edges in $\dom_G$, and hence also in $\codom_G$, are $e_1,\ldots,e_n$ from the source $s_G$ to the sink $t_G$.  By hypothesis there are equalities of $1$-cells:
\begin{itemize}
\item $\phi_{e_i} = \phi'_{e_i}$ for $1\leq i \leq n$;
\item $\phi_{\dom_G} = \phi'_{\dom_{G'}}$ and $\phi_{\codom_G} = \phi'_{\codom_{G'}}$.
\end{itemize}  
Mac Lane considered $\otimes$-words involving $n$ objects in a monoidal category.  Here we consider bracketings of the sequence of $1$-cells $(\phi_{e_1},\ldots,\phi_{e_n})$.
\item Identity morphisms within $\otimes$-words are replaced by identity $2$-cells in the ambient bicategory $\B$.
\item Each instance of the associativity isomorphism $\alpha$ in a monoidal category is replaced by a component of the associator $a$.
\item A basic arrow in Mac Lane's sense is a $\otimes$-word of length $n$ involving one instance of $\alpha$ and $n-1$ identity morphisms.  Basic arrows are replaced by $2$-cells of the forms $\phibar_{A}$ or $\phibar'_{A}$ for an associativity graph $A$.
\item Composites of basic arrows are replaced by vertical composites of $2$-cells.
\item The bifunctoriality of the monoidal product is replaced by the functoriality of the horizontal composition in $\B$.
\item The Pentagon Axiom in a monoidal category is replaced by the Pentagon Axiom \eqref{bicat-pentagon} in the bicategory $\B$.
\end{itemize}
Mac Lane's proof shows that, given any two $\otimes$-words $u$ and $w$ of length $n$ involving the same sequence of objects, any two composites of basic arrows from $u$ to $w$ are equal. With the adaptation detailed above, Mac Lane's argument yields the desired equality \eqref{desired-equality}.
\end{proof}

Now we come to our main result, the Bicategorical Pasting Theorem.
\begin{theorem}[Bicategorical Pasting]\label{thm:bicat-pasting-theorem}
  Suppose $\B$ is a bicategory.  Every pasting diagram in $\B$ has a
  unique composite.
\end{theorem}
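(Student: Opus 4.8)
The plan is to split the theorem into two assertions—existence and uniqueness of the composite—and dispatch them separately. Existence is immediate: given a pasting diagram $\phi$ of shape $G$, \cref{bicat-pasting-existence} guarantees that $G$ admits a composition scheme extension $H = H_n \cdots H_1$, and \cref{def:bicat-diagram-composite} then produces a well-defined $2$-cell $|\phi|$ in $\B(\phi_{s_G},\phi_{t_G})$. So the entire content of the theorem lies in uniqueness: I must show that if $H = H_n\cdots H_1$ and $H' = H'_{n'}\cdots H'_1$ are two composition scheme extensions of the same pasting diagram $\phi$, then the resulting composites satisfy $|\phi_H| = |\phi_{H'}|$.

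First I would set up the comparison by passing to a common refinement of the two extensions. Both $H$ and $H'$ are obtained from $G$ by inserting associativity graphs between (and around) the consistent graphs $G_m\cdots G_1$ coming from a fixed pasting scheme presentation of the underlying anchored graph; crucially, the \emph{interior faces} carrying the genuine $2$-cells $\phi_{F_i}$ are the same in both, and only the interposed strings of associator/inverse-associator faces differ. The strategy is an induction on the number $m$ of genuine (non-associativity) faces, mirroring the induction in \cite{power}. At each stage one isolates a single constituent $2$-cell $\phi_{G_i}$—a $2$-cell $\phi_{F_i}$ whiskered by identities—and compares the two composites on the ``region'' between consecutive genuine faces. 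Between genuine face $i$ and genuine face $i+1$, the two extensions each supply a vertical string of associator $2$-cells connecting $(\codom_{G_i})$ to $(\dom_{G_{i+1}})$; by \cref{maclane-coherence} these two strings of associators are \emph{equal} as composite $2$-cells, since they have the same bracketed domain and codomain and involve only components of $a$ and $a^{-1}$ on the same sequence of $1$-cells.

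\textbf{The main obstacle} I expect is not the coherence comparison of the associator strings—\cref{maclane-coherence} handles that cleanly—but rather the bookkeeping needed to guarantee that, when I peel off one genuine $2$-cell, the remaining diagram is again a pasting diagram of the same combinatorial type, so that the induction hypothesis applies to a diagram with one fewer genuine face. The issue is that whiskering $\phi_{F_i}$ by identity $2$-cells and then vertically composing interacts with the associator strings on both sides of face $i$, and one must use the middle-four interchange \eqref{middle-four} together with the functoriality of horizontal composition \eqref{bicat-c-id} to slide the genuine $2$-cell past the coherence isomorphisms in a bracketing-independent way. Concretely, I would use \eqref{middle-four} to rewrite each $\phi_{G_i} = b_i(1_{\cdots},\phi_{F_i},1_{\cdots})$ and the adjacent associator components so that the genuine contribution of face $i$ is visibly the same regardless of how the surrounding associators are bracketed, reducing the comparison at each inductive step to an application of \cref{maclane-coherence}.

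Finally, I would assemble the pieces: by \cref{graph-comp-associative} and the associativity of bracketed vertical composition, both $|\phi_H|$ and $|\phi_{H'}|$ factor as alternating vertical composites of genuine constituent $2$-cells and associator strings; the genuine constituents coincide term-by-term because they depend only on $\phi_{F_i}$ and the assigned $1$-cells, while the interposed associator strings coincide by \cref{maclane-coherence}. Induction on $m$ then yields $|\phi_H| = |\phi_{H'}|$, establishing that the composite $|\phi|$ is independent of the chosen composition scheme extension, which together with existence completes the proof.
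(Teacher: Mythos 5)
Your existence argument and your use of \cref{maclane-coherence} to compare two strings of associators with the same bracketed (co)domain are both correct, and they match the paper. But your uniqueness argument has a genuine gap: you assume at the outset that $H$ and $H'$ are both built from ``a fixed pasting scheme presentation'' of the underlying anchored graph, so that the genuine (non-associativity) faces occur in the \emph{same vertical order} in both extensions and only the interposed associator strings differ. That assumption begs the hardest part of the theorem. Two composition scheme extensions of $G$ may order the genuine faces differently---for instance, when two interior faces sit side by side horizontally (sharing at most a source/sink vertex), one extension may compose the left face first and the other the right face first. In that situation your term-by-term matching of genuine constituents and associator strings has nothing to match, and \cref{maclane-coherence} does not apply, since the intermediate $1$-cells of the two composites are not even equal. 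This reordering case is precisely the content of Power's theorem in the $2$-categorical setting, where there are no associators at all and your argument would degenerate to a tautology. The paper's induction on the number $n$ of interior faces has an explicit branch for it: when the first genuine faces $F_1 \neq F_1'$ of the two extensions differ, one uses \cref{atomic-domain} to see that $F_1$ and $F_1'$ are disjoint except possibly for $t_{F_1}=s_{F_1'}$ or $t_{F_1'}=s_{F_1}$, reduces to the case $n=2$ with no associativity graphs, and then invokes the bicategory axioms \eqref{hom-category-axioms}, \eqref{bicat-c-id}, and the middle-four interchange \eqref{middle-four} to commute the two disjoint whiskered $2$-cells past each other.

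A secondary, smaller issue: in the case you do treat (same order of genuine faces), the tool that slides a whiskered genuine $2$-cell $\phi_{G_i}$ past an associator string is not the interchange law \eqref{middle-four} together with functoriality, but the \emph{naturality} of the associator $a$ and its inverse. Interchange only rewrites composites of $2$-cells between fixed bracketed $1$-cells; it cannot move $\phi_{G_i}$ across a component of $a$, because $a$ changes the bracketing of the ambient horizontal composite. In the paper's $n=1$ case this is exactly the commutativity of the middle rectangle, which is justified by naturality of $a$, while Mac Lane coherence justifies the top and bottom rectangles. So your proof needs two repairs: replace interchange-plus-functoriality by naturality of $a$ in the sliding step, and add the missing branch of the induction handling extensions whose genuine faces appear in different orders.
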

\begin{proof}
  Suppose $G$ is a bracketed graph whose underlying anchored graph
  admits a pasting scheme extension and suppose $\phi$ is a pasting
  diagram of shape $G$ in $\B$. Existence of a composite follows from
  \cref{bicat-pasting-existence}: $G$ has a composition scheme
  extension $H$, and $\phi$ has a composite with respect to $H$ as
  described in \cref{def:bicat-diagram-composite}.

Now we turn to uniqueness.
Suppose we are given two composition scheme extensions of $G$, say 
\begin{itemize}
\item $H= H_{j+n}\cdots H_1$ with associativity graphs $\{A_1,\ldots,A_j\}$ and
\item $H' = H'_{k+n}\cdots H'_1$ with associativity graphs $\{A'_1,\ldots,A'_k\}$.
\end{itemize} 
We want to show that the composites of $\phi$ with respect to $H= H_{j+n}\cdots H_1$ and $H' = H'_{k+n}\cdots H'_1$ are the same.  The proof is an induction on the number $n$ of interior faces of $G$.  

The case $n=1$ follows from
\begin{enumerate}[label=(\roman*)]
\item \Cref{moving-brackets}, 
\item Mac Lane's Coherence \Cref{maclane-coherence}, and 
\item the naturality of the associator $a$ and its inverse 
\end{enumerate} 
as follows.  Suppose the unique interior face $F$ of $G$ appears in $H_p$ and $H'_q$ for some $1\leq p \leq j+1$ and $1\leq q \leq k+1$.  Since $H_p$ and $H_q'$ are consistent graphs, by \eqref{consistent-bracketing} there exist bracketings $b$ and $b'$ of the same length, say $m$, such that
\[\begin{split}
(\dom_{H_p}) &= b\bigl(e_1,\ldots,e_{l-1}, (\dom_F), e_{l+1},\ldots,e_m\bigr),\\
(\codom_{H_p}) &= b\bigl(e_1,\ldots,e_{l-1}, (\codom_F), e_{l+1},\ldots,e_m\bigr),\\
(\dom_{H'_q}) &= b'\bigl(e_1,\ldots,e_{l-1}, (\dom_F), e_{l+1},\ldots,e_m\bigr),\\
(\codom_{H'_q}) &= b'\bigl(e_1,\ldots,e_{l-1}, (\codom_F), e_{l+1},\ldots,e_m\bigr).
\end{split}\]
There is a unique bracketed atomic graph $C$ with interior face $C_F$ such that
\begin{itemize}
\item $(\dom_C) = (\dom_{C_F}) = (\dom_{H_p})$ and
\item $(\codom_C) = (\codom_{C_F}) = (\dom_{H'_q})$.
\end{itemize}
By (i) there exists a canonical vertical composite $C'=C_r \cdots C_1$ of associativity graphs $C_1,\ldots,C_r$ such that:
\begin{itemize}
\item $(\dom_{C'}) = (\dom_{C_1}) = (\dom_C)$.
\item $(\codom_{C'}) = (\codom_{C_r}) = (\codom_C)$.
\item No $C_i$ changes the bracketing of $(\dom_F)$. 
\end{itemize} 
Indeed, since the bracketed directed path $(\dom_F)$ appears as the $l$th entry in both $b$ and $b'$, we can first regard $(\dom_F)$ as a single edge, say $e_l$, in $C$.  Applying (i) in that setting gives a vertical composite of associativity graphs with domain $b(e_1,\ldots,e_m)$ and codomain $b'(e_1,\ldots,e_m)$.  Then we substitute $(\dom_F)$ in for each $e_l$ in the resulting vertical composite.

The sequence of edges \[\{e_1,\ldots,e_{l-1},\dom_F,e_{l+1},\ldots,e_m\}\] in $\dom_{H_p}$ is the same as those in $\dom_G$ and $\dom_{H'_q}$.  So the underlying $1$-skeletal $G$-diagram of $\phi$ uniquely determines a composition diagram $\phi_{C'}$ of shape $C'$, in which every interior face is assigned either a component of the associator $a$ or its inverse, corresponding to the two cases \eqref{associativity-graph2} and \eqref{associativity-graph1}.  Its composite with respect to the composition scheme presentation $C_r\cdots C_1$ is denoted by $|\phi_{C'}|$.  Similar remarks apply with $\codom_F$, $\codom_{H_p}$, $\codom_G$, and $\codom_{H'_q}$ replacing $\dom_F$, $\dom_{H_p}$, $\dom_G$, and $\dom_{H'_q}$, respectively.

Moreover, since $n=1$, by the definitions of $H_p$ and $H_q'$ there are equalities
\[\begin{split}
\{H_1,\ldots,H_{j+1}\} &= \{A_1,\ldots,A_{p-1},H_p,A_p,\ldots,A_j\},\\
\{H'_1,\ldots,H'_{k+1}\} &= \{A'_1,\ldots,A'_{q-1},H'_q,A'_q,\ldots,A'_k\}.
\end{split}\]
Consider the following diagram in $\B(\phi_{s_G},\phi_{t_G})$.
\[\begin{tikzpicture}[commutative diagrams/every diagram, scale=1.5]
\tikzset{arrow/.style={commutative diagrams/.cd,every arrow}}
\node (dH) at (-1,3) {$\phi_{\dom_H}$};
\node (dHprime) at (1,3) {$\phi_{\dom_{H'}}$}; 
\node (dHp) at (-1,2) {$\phi_{\dom_{H_p}}$}; 
\node (dHq) at (1,2) {$\phi_{\dom_{H_q'}}$}; 
\node (cHp) at (-1,1) {$\phi_{\codom_{H_p}}$}; 
\node (cHq) at (1,1) {$\phi_{\codom_{H'_q}}$}; 
\node (cH) at (-1,0) {$\phi_{\codom_H}$};
\node (cHprime) at (1,0) {$\phi_{\codom_{H'}}$};
\draw [arrow] (dH) to node{\scriptsize{$1_{\phi_{\dom_G}}$}} (dHprime);
\draw [arrow] (dH) to node[swap]{\scriptsize{$\phi_{A_{p-1}}\cdots\phi_{A_1}$}} (dHp); 
\draw [arrow] (dHprime) to node{\scriptsize{$\phi_{A'_{q-1}}\cdots \phi_{A'_1}$}} (dHq);
\draw [arrow] (dHp) to node{\scriptsize{$|\phi_{C'}|$}} (dHq);
\draw [arrow] (dHp) to node[swap]{\scriptsize{$\phi_{H_p}$}} (cHp);
\draw [arrow] (dHq) to node{\scriptsize{$\phi_{H'_q}$}} (cHq);
\draw [arrow] (cHp) to node{\scriptsize{$|\phi_{C'}|$}} (cHq);
\draw [arrow] (cHp) to node[swap]{\scriptsize{$\phi_{A_j}\cdots\phi_{A_p}$}} (cH); 
\draw [arrow] (cHq) to node{\scriptsize{$\phi_{A'_k}\cdots \phi_{A'_q}$}} (cHprime);
\draw [arrow] (cH) to node{\scriptsize{$1_{\phi_{\codom_G}}$}} (cHprime);
\end{tikzpicture}\]
The left-bottom boundary and the top-right boundary are the composites of $\phi$ with respect to $H= H_{j+1}\cdots H_1$ and $H' = H'_{k+1}\cdots H'_1$, respectively.  The top and bottom rectangles are commutative by (ii).  The middle rectangle is commutative by (iii).  This proves the initial case $n=1$.

Suppose $n\geq 2$.   We consider the two interior faces of $G$, say $F_1$ and $F_1'$, that appear first in the lists
\[\{H_1,\ldots,H_{j+n}\}\setminus \{A_1,\ldots,A_j\} \andspace \{H'_1,\ldots,H'_{k+n}\} \setminus \{A'_1,\ldots,A'_k\},\] respectively.  If $F_1 = F_1'$, then, similar to the case $n=1$, the two composites of $\phi$ are equal by (i)--(iii) and the induction hypothesis.

For the other case, suppose $F_1 \not= F_1'$.  Since $G$ has an underlying anchored graph, by \Cref{atomic-domain} $F_1$ and $F'_1$ do not intersect, except possibly for $t_{F_1}=s_{F'_1}$ or $t_{F'_1}=s_{F_1}$.   Similar to the $n=1$ case, by (i)--(iii) and the induction hypothesis, we are reduced to the case with $n=2$, $j=k=0$, the underlying anchored graph of $G$ as displayed below with each edge representing a directed path,
\[\begin{tikzpicture}[scale=1]
\node [plain] (s) {$s_G$}; \node [plain, right=1cm of s] (s1) {$s_{F_1}$}; 
\node [right=.5cm of s1] () {$F_1$};
\node [plain, right=1.5cm of s1] (t1) {$t_{F_1}$};
\node [plain, right=1cm of t1] (s2) {$s_{F'_1}$};
\node [right=.5cm of s2] () {$F'_1$};
\node [plain, right=1.5cm of s2] (t2) {$t_{F'_1}$};
\node [plain, right=1cm of t2] (t) {$t_{G}$};
\draw [arrow] (s) to node{$Q_1$} (s1); 
\draw [arrow, bend left=30] (s1) to node{$\dom_{F_1}$} (t1); 
\draw [arrow, bend right=30] (s1) to node[swap]{$\codom_{F_1}$} (t1);
\draw [arrow] (t1) to node{$Q_2$} (s2); 
\draw [arrow, bend left=30] (s2) to node{$\dom_{F'_1}$} (t2); 
\draw [arrow, bend right=30] (s2) to node[swap]{$\codom_{F'_1}$} (t2);
\draw [arrow] (t2) to node{$Q_3$} (t);
\end{tikzpicture}\]
and
\[\begin{split}
(\dom_G) &= b''\bigl((Q_1), (\dom_{F_1}), (Q_2), (\dom_{F'_1}), (Q_3)\bigr),\\
(\codom_G) &= b''\bigl((Q_1), (\codom_{F_1}), (Q_2), (\codom_{F'_1}), (Q_3)\bigr)
\end{split}\]
for some bracketing $b''$.  In this case, the equality of the two composites of $\phi$ follows from the bicategory axioms \eqref{hom-category-axioms}, \eqref{bicat-c-id}, and \eqref{middle-four}.
\end{proof}

As a corollary, we obtain essentially Power's pasting theorem \cite{power} for $2$-categories.
\begin{corollary}\label{2cat-pasting-theorem}
Suppose $\phi$ is a $G$-diagram in a $2$-category for some anchored graph $G$.  Then the composites of $\phi$ with respect to any two pasting scheme presentations of $G$ are equal.
\end{corollary}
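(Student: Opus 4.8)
The plan is to deduce this from the Bicategorical Pasting Theorem \ref{thm:bicat-pasting-theorem} by regarding the given $2$-category $\B$ as a bicategory in which the associator $a$, the left unitor $\ell$, and the right unitor $r$ are all identities. The essential point is that, because horizontal composition in $\B$ is strictly associative and unital, the $1$-cell $\phi_P$ of \eqref{phi-directed-path} associated to a directed path $P$ does not depend on any choice of bracketing; hence a choice of bracketing on the anchored graph $G$ does not affect any of the data of $\phi$, nor the $2$-cells it computes.

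First I would equip $G$ with a fixed bracketing---say, the left normalized bracketing on every directed path---to obtain a bracketed graph $\tilde G$, and let $\tilde\phi$ denote the resulting $\tilde G$-diagram. This is well defined precisely because each $2$-cell $\phi_F : \phi_{\dom_F} \to \phi_{\codom_F}$ has a well-defined source and target $1$-cell independent of the bracketing. A pasting scheme presentation $G = G_n\cdots G_1$ is by definition a pasting scheme presentation of the underlying anchored graph of $\tilde G$, so by \Cref{bicat-pasting-existence} it determines a composition scheme extension $H$ of $\tilde G$; concretely, $H$ is produced by the construction in the implication (2) $\Rightarrow$ (1) of that theorem, equipping each $G_i$ with a consistent bracketing and splicing in associativity graphs to repair the bracketing mismatches. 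Thus $\tilde\phi$ is a pasting diagram of shape $\tilde G$, and it has a composite $|\tilde\phi|$ with respect to $H$ as in \Cref{def:bicat-diagram-composite}.

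The key step is to identify this bicategorical composite with the $2$-categorical composite $\phi_{G_n}\cdots\phi_{G_1}$ of $\phi$ with respect to the pasting scheme presentation $G = G_n\cdots G_1$. Each associativity graph inserted into $H$ is assigned, by \Cref{def:associator-or-inverse}, a component of $a$ or of $a^{-1}$; since $a$ is the identity in $\B$, every such constituent $2$-cell is an identity $2$-cell and contributes nothing to the vertical composite \eqref{pasting-diagram-composite}. The remaining constituent $2$-cells are the $\phi_{G_i}$, each of which---by \eqref{basic-2cell}---is an iterated horizontal composite of $\phi_{F_i}$ with identity $2$-cells $1_{\phi_e}$ in the order prescribed by the bracketing $b_i$. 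By strict associativity and unitality of horizontal composition, this iterated horizontal composite is independent of $b_i$ and agrees with the usual $2$-categorical whiskering of $\phi_{F_i}$; moreover the intermediate $1$-cells match on the nose, so the vertical composite is defined. Hence $|\tilde\phi| = \phi_{G_n}\cdots\phi_{G_1}$. I expect this identification to be the main point of care, since it is where one must check that the bicategorical bookkeeping of brackets and inserted associators collapses to the naive $2$-categorical vertical composite.

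Finally, I would conclude by invoking uniqueness. Given two pasting scheme presentations $G = G_n\cdots G_1$ and $G = G'_n\cdots G'_1$, the construction above produces two composition scheme extensions $H$ and $H'$ of the single bracketed graph $\tilde G$, and thus two a priori expressions for the composite of the pasting diagram $\tilde\phi$. By \Cref{thm:bicat-pasting-theorem} these composites are equal, and by the identification of the preceding paragraph each one equals the corresponding $2$-categorical composite. Therefore the composites of $\phi$ with respect to the two pasting scheme presentations coincide.
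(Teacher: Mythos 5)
Your proof is correct, but it takes a genuinely different route from the paper's. The paper's own proof is a two-sentence observation: the \emph{proof} of the Bicategorical Pasting Theorem \ref{thm:bicat-pasting-theorem} restricts to the $2$-categorical setting, because when the associator is the identity natural transformation the bracketings do not matter at all and no associativity graphs are ever needed; the induction then runs directly on pasting scheme presentations, giving a self-contained strict argument that is essentially Power's. You instead treat \Cref{thm:bicat-pasting-theorem} as a black box: fix a bracketing on $G$ to get $\tilde G$, convert each pasting scheme presentation into a composition scheme extension of $\tilde G$ via the $(2)\Rightarrow(1)$ construction of \Cref{bicat-pasting-existence}, and then show that the resulting bicategorical composite collapses to the naive $2$-categorical composite because every inserted associator component is an identity $2$-cell (so its constituent $2$-cell is an identity by \eqref{bicat-c-id} and drops out of the vertical composite by \eqref{hom-category-axioms}), while each $\phi_{G_i}$ is bracketing-independent by strictness of horizontal composition. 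That bridging identification $|\tilde\phi| = \phi_{G_n}\cdots\phi_{G_1}$ is exactly where the care is needed, and you handle it correctly. The trade-off is this: your argument is modular, in that one never has to reopen the proof of the pasting theorem, only its statement; the paper's restriction argument, by contrast, yields a $2$-categorical pasting theorem whose proof is independent of all the bracketing and associativity-graph machinery, which matters for the paper's stated goal of recovering Power's theorem without circularity or extraneous structure.
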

\begin{proof}
The proof above restricts to a proof in the $2$-category case because in a $2$-category the associator is the identity natural transformation.  Therefore, the bracketings do not matter at all, and no associativity graphs are needed.  
\end{proof}




\end{document}